    \crefname{section}{}{}
    \numberwithin{equation}{section}
    \newcommand{\crefdefpart}[2]{%
        \hyperref[#2]{\namecref{#1}~\labelcref*{#1}~\ref*{#2}}}
    \let\etoolboxforlistloop\forlistloop 
    \let\forlistloop\etoolboxforlistloop 
    \newcommand{\blx@noerroretextools}{}
    \newcommand{\Z}{\mathbb{Z}}
    \newcommand{\R}{\mathbb{R}}
    \newcommand{\C}{\mathbb{C}}
    \def\sph^#1{\mathbb S^{#1}}
    \newcommand\p[1]{\left(#1\right)}
    \newcommand\abs[1]{\left|#1\right|}
    \newcommand{\floor}[1]{\left\lfloor#1\right\rfloor}
    \newcommand\jp[1]{\left\langle#1\right\rangle}
    \newcommand\jop[2]{\jp{#1,#2}}
    \newcommand\pl[1]{\left\|#1\right\|}
    \def\jopOld#1;{\left\langle{#1}\right\rangle}
    \newcommand\eps\varepsilon
    \newcommand\pji\varphi
    \newcommand\sig\varsigma
    \newcommand\beq{\begin{equation}}
    \newcommand\eeq{\end{equation}}
    \newcommand\inv{^{-1}}
    \def\XXint#1#2#3{{\setbox0=\hbox{$#1{#2#3}{\int}$ }
    \vcenter{\hbox{$#2#3$ }}\kern-.6\wd0}}
    \newcommand\length[1]{\text{length}\p{#1}}
    \newcommand\ef[2]{^{\frac{#1}{#2}}{}}
    \newcommand\ief[2]{^{-\frac{#1}{#2}}{}}
    \newcommand\ie[1]{^{-#1}{}}
    \newcommand\e[1]{^{#1}{}}
    \newtheoremstyle{thmst}
        {5pt}
        {5pt}
        {}
        {}
        {\bfseries}
        {.}
        {.5em}
        {}%
    \newtheoremstyle{rmkst}
        {5pt}
        {5pt}
        {}
        {}
        {\bfseries}
        {.}
        {.5em}
        {}%
    \newtheoremstyle{lmst}
        {5pt}
        {5pt}
        {}
        {}
        {\bfseries}
        {.}
        {5pt plus 1pt minus 1pt}
        {}%
    \newtheoremstyle{prpst}
        {5pt}
        {5pt}
        {}
        {}
        {\bfseries}
        {.}
        {.5em}
        {}%
    \newtheoremstyle{defst}
        {5pt}
        {5pt}
        {}
        {}
        {\bfseries}
        {.}
        {.5em}
        {}%
    \newtheoremstyle{wlst}
        {5pt}
        {5pt}
        {}
        {}
        {\bfseries}
        {.}
        {.5em}
        {}%
    \theoremstyle{thmst}
        \newtheorem{theorem}{Theorem}[section]
        \newtheorem{conjecture}[theorem]{Conjecture}
    \theoremstyle{rmkst}
        \newtheorem{remark}[theorem]{Remark}
    \theoremstyle{defst}
    \theoremstyle{defst}
    \theoremstyle{lmst}
        \newtheorem{lemma}[theorem]{Lemma}
    \theoremstyle{prpst}
        \newtheorem{proposition}[theorem]{Proposition}
    \theoremstyle{wlst}
        \newtheorem{whitelie}[theorem]{White Lie}
        \newlist{defenum}{enumerate}{1} 
        \setlist[defenum]{label=(\roman*),ref=\thedefinition\,(\roman*)}
        \crefname{defenumi}{Definition}{Definitions}
        \newlist{lemenum}{enumerate}{1} 
        \setlist[lemenum]{label=(\roman*),ref=\thelemma\,(\roman*)}
        \crefname{lemenumi}{Lemma}{Lemmas}
        \newlist{thmenum}{enumerate}{1} 
        \setlist[thmenum]{label=(\roman*),ref=\thetheorem\,(\roman*)}
        \crefname{thmenumi}{Theorem}{Theorems}
        \newlist{rmkenum}{enumerate}{1} 
        \setlist[rmkenum]{label=(\roman*),ref=\theremark\,(\roman*)}
        \crefname{rmkenumi}{Remark}{Remarks}
        \newlist{prpenum}{enumerate}{1} 
        \setlist[prpenum]{label=(\roman*),ref=\theproposition\,(\roman*)}
        \crefname{prpenumi}{Proposition}{Propositions}
        \newlist{axenum}{enumerate}{1} 
        \setlist[axenum]{label=(\roman*),ref=\theaxiom\,(\roman*)}
        \crefname{axenumi}{Axiom}{Axioms}
        \newlist{defcrit}{enumerate}{1} 
        \setlist[defcrit]{label=(\roman*),ref=\thedefinition\,(\roman*)}
        \crefname{defcriti}{Definition}{Definitions}
        \newlist{lemcrit}{enumerate}{1} 
        \setlist[lemcrit]{label=(\alph*),ref=\thelemma\,(\alph*)}
        \crefname{lemcriti}{Lemma}{Lemmas}
        \newlist{thmcrit}{enumerate}{1} 
        \setlist[thmcrit]{label=(\alph*),ref=\thetheorem\,(\alph*)}
        \crefname{thmcriti}{theorem}{theorems}
        \newlist{rmkcrit}{enumerate}{1} 
        \setlist[rmkcrit]{label=(\alph*),ref=\theremark\,(\alph*)}
        \crefname{rmkcriti}{remark}{remarks}
        \newlist{prpcrit}{enumerate}{1} 
        \setlist[prpcrit]{label=(\alph*),ref=\theproposition\,(\alph*)}
        \crefname{prpcriti}{proposition}{propositions}
    \newcommand\ext{\mathcal E}
    \newcommand\srf{\Sigma}
    \newcommand\ind{\text d}
        \let\ifnc\@ifnextchar
            \def\<#1>{\jp{#1}}
            \def\-#1/{{}_{#1}}
                \newcommand\pl@write[3]{%
                    \left\|#1\right\|_{L^{#2}#3}%
                    }
                \def\pl #1_#2{%
                    \def\pl@arg@i{#1}%
                    \def\pl@arg@ii{#2}%
                    \def\pl@arg@iii{\alpha}%
                    \futurelet\next\pl@eval%
                    }
                \def\pl@eval{%
                    \ifx\next\bgroup%
                            \expandafter\pl@eval@iii%
                        \else%
                            \expandafter\pl@eval@ii%
                        \fi%
                    }
                \def\pl@eval@iii#1{%
                    \pl@write\pl@arg@i\pl@arg@ii{\p{#1}}%
                    }
                \def\pl@eval@ii{%
                    \pl@write\pl@arg@i\pl@arg@ii{}%
                    }
            \def\MacroR R[#1.#2]{\begingroup\{#1,\cdots,#2\}\endgroup}
            \def\MacroF F#1/#2|{\begingroup\frac{#1}{#2}\endgroup}
            \def\MacroS S#1{\begingroup\{#1\}\endgroup}
            \def\MacroBeginB {}
            \def\MacroEndB {}
            \def\MacroL L#1{L^{#1}}
            \def\MacroPipe|#1|{\abs{#1}}
\title{A Counterexample to the Mizohata-Takeuchi Conjecture}
\author{Hannah Cairo}
\date{February 2025}
\begin{document}
\nocite{*}

\begin{abstract}
    We derive a family of $L^p$ estimates of the X-Ray transform of positive measures in $\R^d$, which we use to construct a $\log R$-loss counterexample to the Mizohata-Takeuchi conjecture for every $C^2$ hypersurface in $\R^d$ that does not lie in a hyperplane. In particular, multilinear restriction estimates at the endpoint cannot be sharpened directly by the Mizohata-Takeuchi conjecture.
\end{abstract}

\maketitle

\section{Introduction}

    \def\curv{\texttt c_{\srf}}
    \def\ang{\texttt a_{\srf}}
    \def\td{\texttt d}
    \def\upart{\,\mathfrak U}
    \def\gdist{\texttt d}
    \def\lconst{\texttt l}
    \def\gauss{\gamma}
    \def\scale{{R\inv}}

Let $\srf\subset\R^d$ be a compact $C^2$ hypersurface with measure $\ind s$. The \textit{extension} operator $\mathcal E:L^1(\srf;\ind s)\to L^\infty(\R^d)$ is defined by
    \[[\ext f](x):=\int_{\srf}e^{-2\pi i\jop x\sig}f(\sig)\ind\sigma(\sig)\]

The Mizohata-Takeuchi conjecture can be stated as follows:

\begin{conjecture}[Mizohata-Takeuchi]\label{conjecture-mt}
    Let $\srf$ be any $C^2$ hypersurface in $\R^d$ with surface measure $\ind\sigma$. Let $f\in L^2(\srf,\ind\sigma)$ and let $w:\R^d\to\R_{\geq0}$ be a nonnegative weight. Then we have
        \[\int_{\R^d}\abs{\ext f(x)}^2w(x)\ind x\lesssim \pl{f}_2{\srf;\ind\sigma}^2\pl Xw_\infty
        \label{conj-mt}\]
    where $Xw$ denotes the X-Ray transform of $w$.
\end{conjecture}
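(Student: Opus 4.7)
The plan is the standard wave-packet decomposition approach to Fourier restriction at a fixed large scale $R$. By a standard $\eps$-removal procedure it is enough, for any $\eps>0$, to prove the inequality at each scale $R$ with constant $R^\eps$, where $\ext f$ is essentially supported on a ball of radius $R$. Decompose $\srf$ into caps $\{\theta\}$ of radius $R^{-1/2}$ and write $f=\sum_\theta f_\theta$ with $f_\theta$ supported on $\theta$. Each piece $\ext f_\theta$ is essentially a sum of wave packets $\phi_T$ indexed by tubes $T$ of dimensions $R^{1/2}\times\cdots\times R^{1/2}\times R$ with long axis parallel to the normal direction of $\theta$. Writing the resulting expansion $\ext f \approx \sum_T \phi_T$, one has the pointwise bound $\|\phi_T\|_\infty^2 \lesssim R^{-(d-1)/2}|c_T|^2$ where the coefficients $c_T$ satisfy $\sum_T |c_T|^2 \lesssim \|f\|_{L^2(\srf)}^2$, together with rapid decay of $\phi_T$ off the tube.

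The heart of the argument is the diagonal contribution. Expanding $|\ext f|^2$ produces a diagonal term $\sum_T \int |\phi_T|^2 w$ and cross terms $\sum_{T\neq T'}\int \phi_T\overline{\phi_{T'}} w$. For the diagonal, the essential input from the X-ray side is that any tube $T$ of length $R$ and cross-sectional area $\sim R^{(d-1)/2}$ satisfies
\[
\int_T w(x)\,\ind x \;\lesssim\; R^{(d-1)/2}\,\|Xw\|_\infty,
\]
obtained by integrating the pointwise hypothesis $\int_\ell w \leq \|Xw\|_\infty$ over the lines $\ell$ parallel to the axis of $T$ that thread its cross section. Combining this with the pointwise bound on $\phi_T$ yields
\[
\sum_T \int |\phi_T|^2 w \;\lesssim\; \sum_T R^{-(d-1)/2}|c_T|^2 \cdot R^{(d-1)/2}\|Xw\|_\infty \;\lesssim\; \|f\|_{L^2(\srf)}^2\,\|Xw\|_\infty,
\]
which is exactly the bound the conjecture predicts. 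Notably, the exponents match without slack, which is a strong indication that any successful proof must pass through a mechanism of this kind.

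The genuine obstacle is the cross terms. When the axes of $T$ and $T'$ are transverse, $\phi_T\overline{\phi_{T'}}$ is essentially localized to the short intersection $T\cap T'$, and one can hope to sum a geometric series in the angle between tubes to absorb the contribution into the diagonal. When $T$ and $T'$ are nearly parallel, however, the product can be non-oscillatory and supported on a tube-like region of full length $R$; because $w$ is only assumed nonnegative, no cancellation against it is available. I would try to handle the nearly-parallel case by grouping tubes by direction into thin slabs, invoking $L^2$-orthogonality in frequency on each slab, and then bootstrapping through dyadic scales via Bourgain-Guth broad/narrow decomposition or Guth-type polynomial partitioning to reduce the small-angle interaction to a transverse multilinear estimate on smaller balls. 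It is precisely this step where I expect the plan to break: the $\log R$-loss counterexamples constructed elsewhere in this paper indicate that the nearly-parallel overlap must unavoidably contribute at least a logarithmic factor, and no regrouping of tubes by direction can eliminate it.
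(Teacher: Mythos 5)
There is a fundamental problem here: the statement you are trying to prove is a \emph{conjecture}, and the main result of this paper, \cref{thm-counterexample}, is a counterexample to it. For every non-planar $C^2$ hypersurface there exist $f$ and a nonnegative weight $w$ with
\[
\int_{B_R}\abs{\mathcal E f}^2 w \,\gtrsim\, \log R\,\|f\|_{L^2(\Sigma)}^2\,\|Xw\|_\infty,
\]
so no proof of the inequality as stated can succeed. Your diagonal computation is correct and is the standard heuristic for why the conjecture was believed: a single wave packet sees only $\int_T w \lesssim R^{(d-1)/2}\|Xw\|_\infty$, and summing over tubes gives exactly the conjectured bound with no slack. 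But your closing paragraph already identifies the true state of affairs: the cross terms between nearly-parallel wave packets are not a technical nuisance to be absorbed by a broad--narrow decomposition or polynomial partitioning; they are where the inequality genuinely fails. No regrouping scheme can rescue the argument, because the target estimate is false.

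The mechanism of failure is worth internalizing. The paper takes $N\sim\log R$ points $\xi_1,\dots,\xi_N$ on $\Sigma$, chosen by an incidence-geometry lemma (\cref{main-lemma}) so that, for \emph{every} direction $\nu$, the projections of the $2^N$ subset sums $c_1\xi_1+\cdots+c_N\xi_N$ are essentially distinct at scale $R^{-1}$. The weight is $w=|\hat h|^2$ with $h$ a sum of bumps at the $\binom{N}{\lfloor N/2\rfloor}$ middle-layer points of this combinatorial lattice, and $f$ is a sum of $L^1$-normalized caps at the $\xi_i$. The separation of projections forces $\|Xw\|_\infty\lesssim 1$ (no line meets more than $O(1)$ of the $R^{-1}$-balls), while the additive structure makes the $N$ translates of $h$ by the $\xi_i$ interfere constructively: $h*f\ind\sigma$ lands back on a neighboring layer of the same lattice with multiplicity $\sim N$, producing the extra factor of $N\sim\log R$ in $\|h*f\ind\sigma\|_2^2$. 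In wave-packet language this is exactly a coherent pile-up of the cross terms you hoped to absorb into the diagonal. If you want a target that might still be true, the paper proposes the local reformulation with an $R^\eps$ loss (\cref{conjecture-refine-mt}); an argument of the type you sketch, if the cross terms could be controlled up to $R^\eps$, should be aimed at that statement instead.
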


The primary result of this paper is the following counterexample to the Mizohata-Takeuchi conjecture with $\log R$-loss:

\begin{theorem}[Counterexample]\label{thm-counterexample}
    For any $C^2$ hypersurface $\srf$ that is not a plane, there is some $f\in L^2(\srf;\ind\sigma)$ and nonnegative weight $w_R\R^d\to\R_{\geq0}$ so that the following holds.
        \[\int_{B_R(0)}\big|\ext f(x)\big|^2w(x)\ind x\gtrsim\log R\pl f_2{\srf;\ind\sigma}^2\sup_{\ell\subset\R^d\text{ a line}}\int_\ell w
        \label{eq-thm-counterexample}\]
\end{theorem}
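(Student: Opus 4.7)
The plan is to localize to a curved two-dimensional slice of $\srf$ and then apply the new $L^p$ estimates for the X-ray transform from the earlier section of the paper. Since $\srf$ is $C^2$ and does not lie in any hyperplane, its second fundamental form is nonzero at some point $p_0\in\srf$. I choose a two-plane $\Pi\subset\R^d$ through $p_0$ so that $\srf\cap\Pi$ is a $C^2$ curve with nonvanishing curvature at $p_0$, and by an affine change of coordinates I model $\srf\cap\Pi$ locally as a parabolic arc in $\Pi\cong\R^2$. The counterexample will be constructed so that $f$ has frequency support concentrated in a neighborhood of this arc inside $\srf$ and $w$ depends essentially only on the projection to $\Pi$, reducing the problem to a two-dimensional one up to factors depending only on the chart.

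In the two-dimensional model, for each dyadic scale $r\in\{1,2,\ldots,R\}$, let $f_r$ be an $L^2$-normalized Knapp bump on the parabolic arc of frequency width $\sim r^{-1/2}$, suitably spatially modulated. Then $\ext f_r$ is a wave packet of amplitude $\sim r^{-1/4}$ essentially supported on a tube $T_r\subset\Pi$ of dimensions $r^{1/2}\times r$, with the direction of $T_r$ prescribable within the sector allowed by the arc. Setting $f=\sum_r f_r$ with caps at well-separated positions gives $\|f\|_2^2\sim\log R$ by approximate orthogonality. The remaining task is to find a positive weight $w$ supported in $B_R$ such that $\|Xw\|_\infty\lesssim 1$ yet $\int_{B_R}|\ext f|^2\,w\gtrsim(\log R)^2$; dividing by $\|f\|_2^2\cdot\|Xw\|_\infty$ then produces the desired $\log R$-loss.

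Constructing $w$ is the heart of the argument and the main obstacle. The pointwise bound $\int_T w\lesssim\mathrm{width}(T)\cdot\|Xw\|_\infty$ shows that a single straight line cannot simultaneously capture significant mass from more than one tube scale, ruling out any single-tube or straight-tube construction. The way around this is the curvature of $\srf\cap\Pi$: the tube directions $\{T_r\}$ can be arranged along a curved trajectory $\mathcal C\subset\Pi$, and the $L^p$ X-ray estimate of the earlier section is precisely the input needed to quantify how much mass can be packed along such a curved family of tubes while keeping $\|Xw\|_\infty\lesssim 1$. Combining this with a dyadic pigeonhole over scales and accounting for the $(\log R)^2$ pairs of wave-packet cross-terms closes the estimate. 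The final lift back to $\R^d$ is routine because the construction depends only on the two-plane $\Pi$.
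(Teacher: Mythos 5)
Your outline does not match the paper's construction, and the core mechanism you propose does not appear to close.

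The paper does not use multi-scale Knapp packets. Instead it puts all caps at the \emph{finest} scale $R^{-1}$: $f=\sum_{i=1}^N S_i$ with each $S_i$ a normalized bump on an $R^{-1}$-ball around $\xi_i\in\srf$, $N\sim\log R$. The weight is constructed through the Fourier-side reformulation $E_w(f,f)=\|h*f\ind\sigma\|_2^2$ where $|\hat h|^2=w$, and $h$ is built as a sum of $R^{-1}$-balls centered on the combinatorial lattice $Q=\{\vec c\cdot\vec\xi : \vec c\in\{0,1\}^N,\ \sum c_i=\lfloor N/2\rfloor\}$ of size $\sim 2^N/\sqrt N$. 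Convolving with $f\ind\sigma$ translates each ball by $\xi_1,\dots,\xi_N$; the algebraic identity $\xi_i+\vec c\cdot\vec\xi=\vec c'\cdot\vec\xi$ forces roughly half of these $N|Q|$ bumps to pile onto $\sim|Q|$ sites, giving an average multiplicity $\sim N$ and hence $\|h*f\ind\sigma\|_2^2\gtrsim N^2|Q|R^d$. The X-ray estimate is then used, not to ``quantify mass along curved tubes,'' but to bound $\|Xw\|_\infty\le\|P_\nu h\|_{L^2L^1(\nu^\perp)}^2\lesssim R|Q|$, where the key input is the incidence lemma that every hyperplane meets $\lesssim 1$ of the $\sim 2^N/\sqrt N$ balls. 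Dividing gives the factor $N\sim\log R$. That factor is an artifact of the arithmetic-combinatorial structure of $Q$; nothing in a multi-scale tube construction reproduces it.

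Concretely, your approach has a quantitative gap. Take $f=\sum_r f_r$ with $\|f_r\|_2=1$ on disjoint caps of width $r^{-1/2}$, so $\|f\|_2^2\sim\log R$. The diagonal contributions satisfy $\int|\ext f_r|^2 w\sim r^{-1/2}\int_{T_r}w\lesssim r^{-1/2}\cdot r^{1/2}\|Xw\|_\infty=\|Xw\|_\infty$, so their sum is $\lesssim(\log R)\|Xw\|_\infty$, i.e.\ it only matches the trivial bound, never beats it. Any extra factor must come from cross terms constructively interfering; but with disjoint caps of separation $\gtrsim r^{-1/2}$ the phases of the $\ext f_r$ only align on a region of diameter $O(1)$, on which the amplitudes $r^{-1/4}$ sum to $O(1)$, so $\int|\ext f|^2w\lesssim\int w\lesssim\|Xw\|_\infty$ there, and the ratio to $\|f\|_2^2\|Xw\|_\infty$ is $\lesssim 1/\log R$. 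If instead you normalize amplitudes to be equal across scales, $\|f\|_2^2$ blows up to $\sim R^{1/2}$ and the ratio is worse. In short, you cannot win the extra $\log R$ factor from dyadic-scale Knapp packets; the paper's gain comes from a completely different mechanism (an $N$-fold self-overlap of a large arithmetic lattice after convolution, controlled by an incidence lemma), and you do not touch that. Your proposal also never actually produces a concrete $w$; the crucial object in the paper is $h$, and the entire upper bound on $\|Xw\|_\infty$ is run through $\|P_\nu h\|_{L^2L^1}^2$ and the incidence lemma, a structure absent from your sketch.
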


\cref{conjecture-mt} originally arose in the study of well-posedness for dispersive PDE (\cite{takeuchi-necessary-conditions-74,takeuchi-cauchy-80,mizohata-cauchy-85}). Since then, the Mizohata-Takeuchi conjecture has taken on an important role in Fourier restriction theory for a few reasons, which we enumerate below.

\subsection{Multilinear Restriction Estimates}

In 2006, the following multilinear form of the restriction conjecture was formulated by \cite{bct-multilinear-06}:

\begin{conjecture}[Multilinear Restriction]\label{conjecture-multilinear-restriction}
Let $\{U_j:j\in[d]\}$ be a collection of $C^2$ hypersurfaces in $\R^d$, so that the normal to $U_j$ at any point is within $\frac1{100}$ of the $x_j$-axis. Let $\ext_j$ denote the corresponding extension operators. Then, for each $\eps>0, q \geq \frac{2d}{d-1}$ and $p^{\prime}\leq \frac{q(d-1)}d$,

\[
\left\|\prod_{j=1}^d \mathcal{E}_j g_j\right\|_{L^{q / d}(B(0, R))} \lesssim\prod_{j=1}^d\left\|g_j\right\|_{L^p\left(U_j\right)}\label{multilinear-restriction}
\]
for all $g_j \in L^p\left(U_j\right), 1 \leqslant j \leqslant d$, and all $R \geqslant 1$.
\end{conjecture}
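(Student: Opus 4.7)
The plan is to follow the polynomial-partitioning framework of Guth, which refines the heat-flow monotonicity argument of Bennett, Carbery, and Tao and delivers the endpoint estimate with an $R^\eps$ loss on the right-hand side (which I assume is the intended form, given the quantifier $\eps>0$ in the statement). At the non-endpoint corner $q>\frac{2d}{d-1}$ one recovers a clean bound without any $R^\eps$.

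First I would reduce to the single endpoint exponent $(p,q)=(2,\tfrac{2d}{d-1})$. Since the trivial bound $\|\ext_j g_j\|_\infty\leq\|g_j\|_{L^1(U_j)}$ gives $\left\|\prod_j\ext_j g_j\right\|_\infty\leq\prod_j\|g_j\|_{L^1}$, a multilinear complex interpolation against this trivial corner recovers every admissible $(p,q)$ from the single endpoint. After a parabolic rescaling I would perform a wave-packet decomposition of each $\ext_j g_j$ at spatial scale $R^{1/2}$: write $g_j=\sum_{T\in\mathbb T_j}a_T\phi_T$, where each $T$ is an $R\times R^{1/2}\times\cdots\times R^{1/2}$ tube pointing within $\tfrac{1}{100}$ of $e_j$, and $\sum_T|a_T|^2\approx\|g_j\|_2^2$.

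Next I would set up a double induction: the outer induction is on the scale $R$ with hypothesis $C(R')\leq\tilde C (R')^\eps$ for $R'<R$, and an inner induction on the total wave-packet count $\sum_j\#\mathbb T_j$. By polynomial ham-sandwich, for a parameter $D=R^\delta$ there is a polynomial $P$ of degree $\leq D$ so that $B_R\setminus Z(P)$ decomposes into $\sim D^d$ cells $\Omega_k$ on each of which the $L^{2/(d-1)}$-mass of $\prod_j\ext_j g_j$ is comparable. In the \emph{cellular} case, the mass lives away from the $R^{1/2+\delta}$-neighborhood $W$ of $Z(P)$: because each tube meets at most $D+1$ cells (Bezout), the wave-packet content per cell drops by a factor of $D$, so the inner induction applies on each cell; summing using concavity of $x\mapsto x^{1/(d-1)}$ and the cell count $D^d$ closes the outer induction for appropriate $\delta$. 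In the \emph{algebraic} case, the mass concentrates in $W$; I would further split each $\mathbb T_j$ into tubes tangential to $Z(P)$ (those trapped in $W$ throughout $B_R$) and those transverse to $Z(P)$, with the transverse contribution handled by rescaling to scale $R/D^2$ and invoking the outer induction with an improved constant.

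The main obstacle is the \emph{tangential} subcase: one must bound $\prod_j|\ext_j g_j^{\mathrm{tang}}|$ on the $R^{1/2+\delta}$-neighborhood of a $(d-1)$-dimensional algebraic variety of degree $D$, when each $g_j^{\mathrm{tang}}$ has wave packets along directions nearly parallel to $e_j$ yet pinned to the variety. Here the transversality hypothesis that the normals to $U_j$ are within $\tfrac{1}{100}$ of the coordinate directions is decisive: after locally straightening the variety and passing to the Gaussian model on the frequency side, the estimate reduces to the Loomis--Whitney inequality
\[
\int_{\R^d}\prod_{j=1}^d h_j(x_1,\ldots,\widehat{x_j},\ldots,x_d)^{1/(d-1)}\,dx\leq\prod_{j=1}^d\|h_j\|_{L^1(\R^{d-1})}^{1/(d-1)},
\]
and more generally to a Brascamp--Lieb inequality whose constant is uniformly bounded under the transversality assumption (the continuity of the Brascamp--Lieb constant is exactly the key ingredient behind the Bennett--Carbery--Tao near-monotonicity argument). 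Controlling this constant uniformly as the tangent data varies, and tracking the $\eps$-losses through $O(\log R/\log D)$ iterations of the partitioning, is the delicate bookkeeping; both are managed by an algebraic sub-induction of polylogarithmic depth, so that after optimizing $\delta$ in terms of $\eps$ one recovers $C(R)\lesssim R^\eps$.
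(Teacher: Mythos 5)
There is a fundamental mismatch here: the statement you are trying to prove is an \emph{open conjecture}, and the paper does not prove it --- it records it as Conjecture 1.3 precisely because the lossless endpoint estimate is unknown. Your proposal hinges on reading the quantifier ``$\eps>0$'' as licensing a factor of $R^\eps$ on the right-hand side, but no such factor appears in the displayed inequality; the conjecture asks for the estimate with an absolute constant, uniformly in $R$. The argument you sketch (wave-packet decomposition, polynomial partitioning with induction on scale, reduction of the tangential case to Loomis--Whitney/Brascamp--Lieb via the near-monotonicity of Bennett--Carbery--Tao) is exactly the known machinery that produces the $R^\eps$-loss version of the estimate, which the paper already cites as established. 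The $O(\log R/\log D)$ iterations of the induction on scales are precisely where the unavoidable $R^\eps$ enters, and no amount of ``optimizing $\delta$ in terms of $\eps$'' removes it at the endpoint $(p,q)=(2,\tfrac{2d}{d-1})$; your interpolation step only transfers the endpoint bound to the other admissible exponents, so it cannot manufacture the endpoint itself. Tao's 2020 result removes the loss only away from the endpoint.

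Moreover, the entire point of the paper is relevant here: Carbery--Hänninen--Valdimarsson showed that the Mizohata--Takeuchi conjecture would imply the lossless endpoint multilinear restriction estimate (via Guth's endpoint multilinear Kakeya), and the paper's main theorem constructs a $\log R$-loss counterexample to Mizohata--Takeuchi, showing that this particular route to Conjecture 1.3 is closed. So not only does your proposal prove a strictly weaker (and already known) statement, but the paper you are working inside is in part a demonstration that one of the proposed strategies for the actual statement cannot succeed. To engage with the statement as written you would need a genuinely new idea for removing the $\eps$-loss at the endpoint, which is beyond both your sketch and the current literature.
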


Currently, \cref{conjecture-multilinear-restriction} has been proven away from the endpoint (\cite{tao-multirestriction-20}) and up to $R^\eps$ losses at the endpoint (\cite{bct-multilinear-06}).

In a recent paper, \cite{chv-weights-23} (see also \cite{chv-multilinear-18}) developed a general functional-analytic approach to dual formulations of multilinear estimates. In particular, they showed that The Mizohata-Takeuchi Conjecture implies \cref{conjecture-multilinear-restriction} without $R^\eps$ losses, using the endpoint multilinear Kakeya inequality of Guth (\cite{guth-endpoint-multi-kakeya-10}).

\cref{thm-counterexample} therefore shows that it is not possible to use this approach to prove \cref{conjecture-multilinear-restriction}.

\subsection{Stein's Conjecture}

In 1978, it was suggested by Stein (\cite{stein-conjecture-79}) that Kakeya or Nikodym maximal functions may control the behavior of Bochner-Riesz multipliers (see \cite{bcsv-stein-conjecture-06} and the references therein for the history of this approach).

In the 1990's, several papers (see \cite{crs-radial-mt-92,brv-radial-mt-97,cs-a-97,cs-b-97} and the references therein) were written on the subject, and the following conjecture has become known as Stein's conjecture:

\begin{conjecture}[Stein's Conjecture]\label{conjecture-stein}
    Under the hypotheses of \cref{conjecture-mt}, the following holds:
        \[\int_{\R^d}\abs{\ext f}^2w\ind x\lesssim\int_\srf|f(\sig)|^2\sup_{\ell\parallel N(\sig)}Xw(\ell)\ind\sigma(\sig)
        \label{eq-conj-stein}\]
\end{conjecture}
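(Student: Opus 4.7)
The plan is to attempt a proof via a wave packet decomposition of $\ext f$ at scale $R$, combined with almost-orthogonality to reduce $\int |\ext f|^2 w$ to contributions from single wave packets, each of which can be controlled by the X-ray transform of $w$ in the relevant normal direction. First, fix a large parameter $R$; by standard limiting arguments it suffices to prove the estimate when $w$ is supported in a ball of radius $R$. Decompose $\srf$ into caps $\tau$ of diameter $\sim R^{-1/2}$ with centers $\sig_\tau$ and unit normals $N(\sig_\tau)$. Writing $f = \sum_\tau f_\tau$ with $f_\tau := f \mathbf{1}_\tau$, standard wave packet theory gives $\ext f_\tau = \sum_{T \in \mathbb T_\tau} a_T \phi_T$, where $\mathbb T_\tau$ consists of tubes of dimensions $R^{1/2} \times \cdots \times R^{1/2} \times R$ oriented along $N(\sig_\tau)$ and the $\phi_T$ are approximately $L^2$-normalized bumps.

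For a single wave packet, $|\phi_T|^2$ is morally $|T|^{-1} \mathbf{1}_T$, giving the pointwise pigeonholing bound
\[\int |\phi_T|^2 w \lesssim \frac{1}{|T|} \int_T w \leq \frac{1}{R} \sup_{\ell \parallel N(\sig_\tau)} Xw(\ell),\]
since $T$ has cross-sectional area $R^{(d-1)/2}$ and length $R$. Combined with the $L^2$-almost-orthogonality $\|f_\tau\|_2^2 \sim R^{-1} \sum_{T \in \mathbb T_\tau} |a_T|^2$ and summing over $T, \tau$, the diagonal contribution yields exactly the right-hand side of \cref{eq-conj-stein}.

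The essential difficulty is the off-diagonal contribution $\sum_{\tau \neq \tau'} \sum_{T, T'} a_T \overline{a_{T'}} \int \phi_T \overline{\phi_{T'}} w$. The product $\phi_T \overline{\phi_{T'}}$ oscillates at frequency $\approx \sig_\tau - \sig_{\tau'}$, so formally one would Littlewood-Paley decompose $w = \sum_j w_j$ at frequency scales $2^j$ and pair the cross-term at scale $|\sig_\tau - \sig_{\tau'}| \sim 2^j$ against $\widehat{w_j}$. I would then attempt to bound each such contribution in terms of an X-ray quantity via the Fourier slice theorem, interpolating appropriately.

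The main obstacle is that nothing prevents $w$ from concentrating its Fourier mass in directions tangent to $\srf$, producing resonances between distinct caps $\tau, \tau'$ that the X-ray transform cannot detect. Indeed, \cref{thm-counterexample} shows that even the weaker Mizohata-Takeuchi inequality, obtained by replacing the right-hand side of \cref{eq-conj-stein} by $\|f\|_2^2 \sup_\ell Xw(\ell)$, fails by a $\log R$ factor; since Stein's conjecture implies Mizohata-Takeuchi, the plan above necessarily breaks down. A genuine attempt would require a decomposition of $w$ adapted to its own singular support rather than to the geometry of $\srf$, and it is not clear what such a decomposition should look like.
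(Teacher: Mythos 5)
You are attempting to prove a statement that the paper does not prove --- and in fact refutes. \cref{conjecture-stein} is stated only as a conjecture, and the paper's main result, \cref{thm-counterexample}, shows it is false: as the paper notes immediately after stating the conjecture, Stein's conjecture implies the Mizohata-Takeuchi conjecture (by bounding $\sup_{\ell\parallel N(\sig)}Xw(\ell)$ uniformly over $\sig$), and \cref{thm-counterexample} exhibits, for every curved $C^2$ hypersurface, an $f$ and a weight $w$ for which the Mizohata-Takeuchi inequality fails by a factor of $\log R$. So no proof of \cref{eq-conj-stein} can exist, and the right verdict on your proposal is not that it has a fixable gap but that the off-diagonal obstruction you run into is fatal by construction.

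That said, your diagnosis of \emph{where} the argument dies is accurate and matches the paper's own heuristics. Your diagonal computation --- a single wave packet $\phi_T$ contributes $\lesssim |T|^{-1}\int_T w \leq R^{-1}\sup_{\ell\parallel N(\sig_\tau)}Xw(\ell)$ --- is precisely the square-function motivation the paper gives for why one might believe \cref{eq-conj-stein} in the first place (the paragraph beginning ``The reason why one might expect...''). The failure is entirely in the cross terms: the paper's counterexample takes $f$ to be a sum of $N\sim\log R$ normalized bumps on $R^{-1}$-separated caps and takes $w=|\hat h|^2$ for an $h$ supported on a carefully chosen combinatorial lattice $Q$ of sumsets of the cap centers (\cref{main-lemma}), arranged so that $h*f\ind\sigma$ recombines constructively onto a set of size comparable to $|Q|$ while no line ever sees more than $O(1)$ of the lattice points. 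This is exactly a weight whose structure is adapted to resonances between distinct caps in a way the X-ray transform cannot detect, which is the scenario you flag in your last paragraph. If you want to salvage something, the correct target is not \cref{eq-conj-stein} but the local reformulation \cref{conjecture-refine-mt}, where an $R^\eps$ loss absorbs the $\log R$ defect; whether even that holds is left open by the paper.
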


It is worthwhile for historical purposes to note that Stein originally posed several different forms of this inequality, and only recently has Stein's conjecture (in the context of the extension operator) come to refer to the inequality above (see \cite{bgno-phase-space-24}).

The reason why one might expect \cref{eq-conj-stein} to hold is as follows. One often expects $\ext f$ to be controlled in some sense by square functions, maybe of the form $\sum_{\Theta}|\hat{f|_\Theta\ind\sigma}|^2$ where $\Theta$ ranges over some collection of $R^\alpha$-caps for some $\alpha$. For weighted estimates on quantities of the form $\int_{\R^d}|\ext^2(x)|w(x)\ind x$, one expects the contribution from each term defining the square function to be concentrated along tubes by a parabolic rescaling argument. The norm $\pl f_2^2$ measures the concentration of the $L^2$ mass of $\ext f$ along these tubes in a way that is compatible with square functions, and so we are led to consider estimates of the form \cref{eq-conj-stein}.

Note that Stein's conjecture would directly imply the Mizohata-Takeuchi conjecture since

    \[\int_\srf|f(\sig)|^2\sup_{\ell\parallel N(\sig)}Xw(\ell)\ind\sigma(\sig)\leq\p{\int_\srf|f(\sig)|^2\ind\sigma(\sig)}\sup_{\sig\in\srf}\sup_{\ell\parallel N(\sig)}Xw(\ell)\ind\sigma(\sig)=\pl f_2{\srf;\ind\sigma}^2\pl Xw_\infty\]

Thus, \cref{thm-counterexample} implies that Stein's conjecture is false as stated in \cref{conjecture-stein}.

\subsection{The Kakeya Maximal Conjecture and the Restriction Conjecture}

The counterexample is also interesting because, assuming the Kakeya maximal conjecture, Stein's conjecture would imply the restriction conjecture (\cite{bgno-phase-space-24}).

Consider a Kakeya-type maximal operator $\mathcal M$ defined on suitably regular weights $w:\R^n\to\R_{\geq0}$ by
    \[\mathcal Mw(\omega)=\sup_{T\parallel \omega}\int_Tw,\qquad\omega\in\sph^{n-1}\]
where $T$ an infinitely long tube of width $1$. We denote the local maximal function by $\mathcal M_Rw=\mathcal M(w|_{B_R})$. Note that $\mathcal M_R$ is equivalent to the standard Kakeya maximal operator, upon a rescaling. As such, $\mathcal M_R$ is expected to satisfy the estimate
    \[\|\mathcal M_R\|_{L^n(\R^n)\to L^n(\sph^{n-1})}\lesssim_\eps R^\eps\label{kakeya-maximal-conj-eq}\]
which is equivalent to the Kakeya maximal conjecture.

To see how this is related to the restriction conjecture, take some $f\in L\ef{2n}{n-1}(\srf)$ and note that
    \[\pl \ext f_{\frac{2n}{n-1}}^2=\sup_{w\in L^n(\R^n)}\pl w_n\inv\int_{\R^d}\abs{\ext f}^2(x)w(x)dx\]
by H\"older's inequality. If Stein's conjecture were to hold, we have
    \[\pl{\ext f}_{\frac{2n}{n-1}}^2\lesssim\sup_{w\in L^n(\R^n)}\int_\srf|f(\xi)|^2\mathcal M(N_\srf(\xi))\ind\sigma(\xi)\]
From here, if \cref{kakeya-maximal-conj-eq} holds, then the above implies
\[\pl{\ext f}_{\frac{2n}{n-1}}{B_R}\lesssim_\eps R^\eps\pl f_{\frac{2n}{n-1}}{\srf}\]
if $\srf$ has nonvanishing curvature.

Note that the counterexample presented here only shows that Stein's conjecture is false up to a logarithmic factor; if Stein's conjecture were true locally, then the Kakeya maximal conjecture would still imply the restriction conjecture.

\subsection{$L^2$-wellposedness of first-order pertubations of the Schr\"odinger equation}

The historical value of the Mizohata-Takeuchi conjecture stems from its applications to the well-posedness of first-order pertubations of the Schr\"odinger equation. Around 1980, Takeuchi (\cite{takeuchi-necessary-conditions-74},\cite{takeuchi-cauchy-80}) was studying the Cauchy problem for operators of the form
    \[L(u)=i\partial_tu+\triangle u+\sum_{j=1}^nb_j\partial_ju+c(x)u=f
        \label{schrodinger}
        \]
The Cauchy problem for \cref{schrodinger} is $L^2$-wellposed if, given any $u_0\in L^2(\R^n)$ and $f\in C^0_t(L^2(\R^n))$, there is a unique solution $u(x,t)$ to \cref{schrodinger} for $|t|<T$ that satisfies the estimate
    \[\pl u(\cdot,t)_2\lesssim_T \pl {u_0}_2+\abs{\int_0^t\pl f(\cdot,s)_2\ind s}
        \label{wellposed}\]
Takeuchi (\cite{takeuchi-necessary-conditions-74,takeuchi-cauchy-80}) claimed that a sufficient condition for \cref{wellposed} to hold is the following:
    \[\text{Re}\p{\int_0^t\sum_{j=1}^nb_j(x+s\nu)\nu_j\ind s}\lesssim1
        \label{takeuchi-sufficient}\]
where the constant is independent of $(x,t,\nu)\in\R^n\times\R\times\mathbb S^{n-1}$. Later, Mizohata (\cite{mizohata-cauchy-85}) showed that Takeuchi's argument had an error.

It is well-known that the Cauchy problem for \cref{schrodinger} is closely related to weighted $L^2$ estimates of the form \cref{conj-mt}.
\subsection{The intrinsic value of the Mizohata-Takeuchi Conjecture}

Lastly, it is worthwhile to note that there is some intrinsic value in the Mizohata-Takeuchi conjecture. As noted in \cite{ciw-mt-24}, much of the development in Fourier restriction theory has centered around the $L^p$-$L^q$ mapping properties of $\ext$, and the Mizohata-Takeuchi and Stein conjectures are some of the very few attempts to understand the shape of the level sets of $\ext f$ for various $f:\srf\to\C$.
\subsection{Progress on the Mizohata-Takeuchi Conjecture}

In this section, we present a chronological overview of some important results in the direction of \cref{conjecture-mt}. One of the first influential results was established by Barcel\'o, Ruiz, and Vega (\cite{brv-radial-mt-97}) which demonstrated the radial case of the Mizohata-Takeuchi conjecture (see also \cite{crs-radial-mt-92}). This was achieved by directly estimating Bessel functions and the extension operator on spherical harmonics.

In 2009, Carbery (\cite{carbery-tubes-09}) studied the nature of sets $U\subset\R^d$ where $X(1_U)$ is small. In general, the study of tube-occupancy of sets is difficult, independently of weighted Fourier extension estimates (see \cite{rd-nset-24}).

More recently, Shayya (\cite{shayya-plane-mt-23}) has studied the extent to which Mizohata-Takeuchi-type estimates can be derived from decay properties of $\widehat{\ind\sigma}$ in the plane. Another important recent development was by Ortiz (\cite{ortiz-cone-mt-23}), in which the Mizohata-Takeuchi conjecture was proven with an $R^{\frac14+\eps}$-loss for the cone in $3$ dimensions.

Probably the most influential recent development in the direction of the Mizohata-Takeuchi conjecture is a result by Carbery, Iliopoulou, and Wang (\cite{ciw-mt-24}), in which several special cases (for the weight) were proven, as well as the general conjecture with $R^{\frac{n-1}{n+1}+\eps}$-loss. The approach was based on the refined decoupling estimates of Guth, Iosevich, Ou, and Wang (\cite{giow-refined-decoupling-20}).

The $R\ef{n-1}{n+1}$-losses are especially relevant when placed in the context of a recent talk given by Guth (\cite{guth-barrier-22}), in which he demonstrates that it is not possible to prove the Mizohata-Takeuchi conjecture without $R\ef{n-1}{n+1}$ loss using only certain decoupling axioms.

More recently,  Bennett, Gutierrez, Nakamura, and Oliveira (\cite{bgno-phase-space-24}) have explored connections to time-frequency analysis and have proven a related Mizohata-Takeuchi-type inequality involving Sobolev norms.

\subsection{Reformulations}

In light of \cref{thm-counterexample}, the following local version may be a plausible reformulation of the Mizohata-Takeuchi conjecture.

\begin{conjecture}[Local Mizohata-Takeuchi]\label{conjecture-refine-mt}
    Under the same hypotheses as \cref{conjecture-mt}, we have
        \[\int_{B_R}\abs{\ext f(x)}^2w(x)\ind x\lesssim_\eps R^\eps\pl f_2{\srf;\ind\sigma}^2\sup_{\ell\subset\R^d\text{ a line}}\int_\ell w
        \label{conj-refine-mt}\]
\end{conjecture}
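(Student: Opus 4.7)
The plan is to combine refined decoupling with a broad-narrow induction on scales, starting from the framework of \cite{ciw-mt-24}. First I would perform a wave packet decomposition of $\ext f$ at scale $R$: cover $\srf$ with caps $\theta$ of radius $R\ief12$, write $f = \sum_\theta f_\theta$, and decompose each $\ext f_\theta$ into wave packets supported on tubes $T$ of dimensions $R\ef12\times\cdots\times R\ef12\times R$ aligned with the normal to $\theta$. Expanding $|\ext f|^2$ and using $L^2$-almost-orthogonality between distinct caps reduces the problem to controlling the diagonal sum $\sum_T |a_T|^2\int_T w$, together with an off-diagonal interference term handled by the refined $L^p$ decoupling inequality of \cite{giow-refined-decoupling-20} at the sharp exponent $p = \frac{2(d+1)}{d-1}$.

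To avoid the $R\ef{d-1}{d+1}$ loss already present in \cite{ciw-mt-24}, I would interleave the decoupling step with a Bourgain-Guth broad-narrow decomposition at every dyadic scale between $1$ and $R$. On the broad part, where tubes with pairwise transverse normals concentrate in a common region, Guth's endpoint multilinear Kakeya inequality (\cite{guth-endpoint-multi-kakeya-10}) supplies control with only an $R^\eps$ loss against $\sup_\ell\int_\ell w$, following the functional-analytic reduction of \cite{chv-weights-23}. On the narrow part, where wave packets concentrate in a thin neighborhood of some hyperplane, a parabolic rescaling reduces the problem to the same estimate at a strictly smaller scale, so one closes by induction on $R$ provided that the restricted weight still satisfies an X-Ray bound comparable to the original $\sup_\ell\int_\ell w$.

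The main obstacle is the narrow case, which must absorb the $\log R$ concentration exhibited in \cref{thm-counterexample} without blowing the total loss past $R^\eps$. Guth's barrier (\cite{guth-barrier-22}) makes clear that decoupling axioms alone cannot achieve this, so any successful argument must use additional geometric information beyond decoupling, for instance sharper incidence bounds for tube-families meeting sets of small tube-occupancy in the sense of \cite{carbery-tubes-09,rd-nset-24}, or the Sobolev-type phase-space estimates of \cite{bgno-phase-space-24}. Combining such bounds with the refined decoupling scheme and carefully amortizing the loss across the $O(\log R)$ levels of induction appears to be the essential technical challenge; in particular, one must rule out iterated narrow-narrow cascades in which each level leaks a factor close to the counterexample's $\log R$, since any such accumulation would exit the $R^\eps$ regime that \cref{conjecture-refine-mt} demands.
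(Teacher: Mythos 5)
This statement is a \emph{conjecture}, not a theorem: the paper does not prove it, and the text immediately following it reads, ``It is unclear whether one should expect \cref{conjecture-refine-mt} to hold, or to expect an $R^{(n-1)/(n+1)}$-loss counterexample in the spirit of Guth's argument.'' The Local Mizohata--Takeuchi inequality is put forward only as a possible salvage of \cref{conjecture-mt} after the $\log R$-loss counterexample of \cref{thm-counterexample}; the author leaves open whether it is true at all. There is therefore no proof in the paper against which to compare your proposal, and you should not attempt to supply one as though the result were established.

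Your proposal is, to its credit, forthright about not being a proof: you identify the narrow case of the broad-narrow decomposition as ``the main obstacle,'' invoke Guth's barrier to explain why decoupling axioms alone cannot close it, and end by calling the remaining steps ``the essential technical challenge.'' That self-assessment is accurate, and nothing you outline closes the gap. What you have written is a research program consistent with the state of the art (the broad side can indeed be controlled by endpoint multilinear Kakeya via the reduction in \cite{chv-weights-23}, and \cite{ciw-mt-24} already gives the $R^{(n-1)/(n+1)+\eps}$ version you would need to beat), but the critical step---absorbing narrow contributions across $O(\log R)$ scales of rescaling without compounding losses---is exactly where no argument is currently known, and \cref{thm-counterexample} shows that a logarithmic loss genuinely occurs in at least one configuration. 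Two further cautions: the parabolic rescaling in the narrow step distorts both $w$ and the family of lines over which $\sup_\ell\int_\ell w$ is taken, so verifying that ``the restricted weight still satisfies an X-Ray bound comparable to the original'' is itself a nontrivial claim, not something to assume when closing the induction; and the geometric inputs you gesture toward (tube-occupancy bounds as in \cite{carbery-tubes-09,rd-nset-24}, or the Sobolev-type phase-space estimates of \cite{bgno-phase-space-24}) are not known to supply the strengthening needed to break Guth's barrier. Treat \cref{conjecture-refine-mt} as a genuinely open problem, and present your proposal as a roadmap with an unresolved core rather than as a proof.
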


It is unclear whether one should expect \cref{conjecture-refine-mt} to hold, or to expect an $R\ef{n-1}{n+1}$-loss counterexample in the spirit of Guth's argument (\cite{guth-barrier-22}).

\subsection{Outline of this paper}

We begin in Section \cref{xray-estimates} by proving $L^2$-based estimates on the X-Ray transform for positive measures. We then discuss how this can be used to formulate a weaker version of the Mizohata-Takeuchi conjecture on the Fourier side in Section \cref{sec-proof-thm-counter}. The construction relies on an incidence geometry Lemma, which is proven separately in Section \cref{sec-incidence}.


\subsection{Acknowledgments}

The author would like to express her gratitude to her advisor, Ruixiang Zhang, for his generous introduction to the beautiful theory of Fourier restriction, as well as for pointing out the Mizohata-Takeuchi conjecture and reviewing earlier versions of this paper.

\section{X-Ray transform estimates for positive measures}\label{xray-estimates}

Let $\nu\in\mathbb S^{d-1}$. For any weight $w:\R^n\to[0,\infty)$, we define $X_\nu w(z)=\int_{\R\nu+z}w$ for any $z\in\nu^\perp$. For any function $\mu:\R^n\to\C$ so that $\mu$ restricts to an $L^p$ function on every plane perpendicular to $\nu$, we define the restriction map $P_\nu\mu:\R\to L^p(\nu^\perp)$ by $P_\nu\mu(\lambda)(\omega)=\mu(\lambda\nu+\omega)$.

In this section, we prove

\begin{theorem}
    Let $h:\R^n\to\C$ be so that $|\hat h|^2=w$, for a nonnegative weight $w:\R^n\to[0,\infty)$. Then we have the estimate
        \[\pl {X_\nu}w_p\leq\pl \big.{P_\nu h}_{2}{L^q(\nu^\perp)}^2
            \label{hy-xray}\]
    for $p\in[1,\infty]$ and $q=\frac{2p}{2p-1}$. In particular, when $p=\infty$ we have the estimate
        \[\pl {X_\nu}w_\infty\leq\pl \big.{P_\nu h}_{2}{L^1(\nu^\perp)}^2
            \label{hy-xray-infty}\]
     and in this case, the estimate is sharp when $h\geq0$ everywhere.
\end{theorem}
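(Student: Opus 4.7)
The plan is to unwind $X_\nu w$ via a partial Fourier slicing identity, then reduce to a scalar inequality by Minkowski plus Hausdorff-Young. Without loss of generality I would take $\nu=e_n$ and split $x=(\omega,\tau)\in\nu^\perp\times\R$, so that $P_\nu h(\tau)(\omega)=h(\omega,\tau)$. The first move is to Fourier-transform $h$ only in the transverse variable $\omega$, producing $H(z,\tau):=\widehat{P_\nu h(\tau)}(z)$, where the hat denotes the Fourier transform on the hyperplane $\nu^\perp$. The full Fourier transform $\hat h(z,\lambda)$ is then the one-dimensional Fourier transform of $\tau\mapsto H(z,\tau)$ at $\lambda$, so Plancherel in the $\lambda$ variable alone gives the pointwise identity
\[X_\nu w(z)=\int_\R\abs{\hat h(z,\lambda)}^2\ind\lambda=\int_\R\abs{H(z,\tau)}^2\ind\tau.\]

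With this identity in hand, I would take the $L^p(\nu^\perp)$ norm in $z$ and apply Minkowski's integral inequality (valid since $p\geq1$) to pull the $L^p_z$ norm inside the $\tau$ integral, reducing the problem to controlling
\[\int_\R\bigl\|\,\abs{H(\cdot,\tau)}^2\bigr\|_{L^p(\nu^\perp)}\ind\tau=\int_\R\|H(\cdot,\tau)\|_{L^{2p}(\nu^\perp)}^2\ind\tau.\]
Since $H(\cdot,\tau)=\widehat{P_\nu h(\tau)}$ on $\nu^\perp$ and $2p$ is the conjugate exponent of $q=\tfrac{2p}{2p-1}\in[1,2]$, Hausdorff-Young on $\nu^\perp$ yields $\|H(\cdot,\tau)\|_{L^{2p}(\nu^\perp)}\leq\|P_\nu h(\tau)\|_{L^q(\nu^\perp)}$. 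Integrating in $\tau$ produces exactly the claimed mixed-norm bound $\|P_\nu h\|_{L^2_\tau L^q_\omega}^2$. At $p=1$ the chain reduces to a double application of Plancherel, and at $p=\infty$ to the trivial bound $\|\hat g\|_{L^\infty}\leq\|g\|_{L^1}$.

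For the sharpness claim in \cref{hy-xray-infty}, I would observe that when $h\geq0$ each slice $P_\nu h(\tau)$ is nonnegative, so
\[\|\widehat{P_\nu h(\tau)}\|_{L^\infty(\nu^\perp)}=\widehat{P_\nu h(\tau)}(0)=\|P_\nu h(\tau)\|_{L^1(\nu^\perp)},\]
which is the equality case of Hausdorff-Young; moreover, evaluating the slicing identity at the common maximizer $z=0$ turns the Minkowski step at $p=\infty$ into an equality as well. The resulting chain collapses, matching the upper bound.

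I do not anticipate any genuine obstacle. The delicate points to watch are (a) fixing Fourier conventions so that the one-variable Plancherel step is an exact identity, and (b) verifying that the Minkowski-plus-Hausdorff-Young chain is admissible on precisely the stated range $p\in[1,\infty]$, which forces $q=\tfrac{2p}{2p-1}$ as the unique exponent consistent with both steps.
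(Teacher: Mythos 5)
Your proof is correct and follows essentially the same approach as the paper: both derive the slicing identity $X_\nu w(z)=\int_\R\big|\widehat{P_\nu h(\lambda)}(z)\big|^2\,\ind\lambda$ and then apply Minkowski's integral inequality followed by Hausdorff--Young on $\nu^\perp$, with sharpness at $p=\infty$ observed from equality in both steps when $h\geq0$. Your route to the slicing identity (factoring the full Fourier transform into transverse and longitudinal pieces and invoking one-dimensional Plancherel in the $\nu$-direction) is an equivalent, and arguably more transparent, repackaging of the paper's projection-slice-theorem-plus-convolution-identity computation.
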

\begin{proof}
    The proof is relatively straightforward, once we digest the meaning of the norm appearing on the right of \cref{hy-xray}. We write
        \[\pl {P_\nu} h(\lambda)_q{\nu^\perp}=\p{\int_{\nu^\perp}\abs{h(\lambda\nu+\omega)}^q}\ef1q\]
    Thus,
        \[\pl \big.{P_\nu h}_{2}{L^q(\nu^\perp)}=\p{\int_\R\p{\int_{\nu^\perp}\abs{h(\lambda\nu+\omega)}^q}\ef2q}\ef12\]
    We note that $X_\nu w=\widehat{P_\nu{\;\check~\!\!\!\!w}(0)}$ by the projection-slice theorem. We note that
        \[P_\nu\check w(0)=P_\nu(h*\tilde{\hat h})(0)=\int_\R P_\nu h(\lambda)*\widetilde{\overline{P_\nu h(\lambda)}}\]
    where $\tilde\mu$ denotes $\mu\circ(x\mapsto-x)$ for any function $\mu$. Therefore, we have
        \[X_\nu w(z)=\int_\R \abs{\widehat{P_\nu h(\lambda)}(z)}^2\ind\lambda\]
    Minkowski's inequality gives
        \[\pl{X_\nu w}_p{\nu^\perp}\leq\int_\R\pl{\widehat P_\nu h(\lambda)}_{2p}{\nu^\perp}^2\]
    Applying Hausdorff-Young yields \cref{hy-xray}. Note that, if $h\geq0$ everywhere and $p=\infty$, then both Minkowski's inequality and Hausdorff-Young are sharp.
\end{proof}

\section{The proof of Theorem \ref{thm-counterexample}}\label{sec-proof-thm-counter}

This section is divided into four parts:
    \begin{enumerate}
        \item[\textbf{\cref{subsec-prelim}.}] We discuss some preliminary estimates. This part is primarily devoted to posing similar problems of Mizohata-Takeuchi-type on the Fourier side.
        \item[\textbf{\cref{subsec-wh}.}] We present a ``white lie'' proof of \cref{thm-counterexample} to help the reader understand the important points.
        \item[\textbf{\cref{subsec-lemma}.}] We explain briefly an incidence lemma, which we prove in section \cref{sec-incidence}.
        \item[\textbf{\cref{subsec-proof}.}] We present a rigorous proof of \cref{thm-counterexample}.
    \end{enumerate}
\subsection{Some preliminaries}\label{subsec-prelim}

Let $w:B_R\to[0,\infty)$ be a nonnegative weight. As \cite{ciw-mt-24} notes, we are free to assume that $w$ is locally constant at scale $1$, and so $w$ satisfies the necessary regularity conditions discussed in the previous section. We are interested in the quantity $E_w(f,g):L^2(\srf;\ind\sigma)\times L^2(\srf;\ind\sigma)\to\C$ given below.
    \[E_w(f,g):=\int_{\R^d}\ext f(x)\overline{\ext g}(x)w(x)\ind x=\jop{w\ef12\ext f\ind\sigma}{w\ef12\ext g\ind\sigma}
        \label{def-E-w}\]
We should think of $E_w$ as a quadratic form on the Hilbert space $L^2(\srf;\ind s)$. The Mizohata-Takeuchi conjecture seeks an upper bound on the largest eigenvalue of $E_w$ in terms of $\pl Xw_\infty$.

As in the previous section, let $h:\R^n\to\C$ so that $\abs{\hat h(x)}^2=w(x)$. We write
    \[E_w(f,g)=\jop{h*f\ind\sigma}{h*g\ind\sigma}
        \label{eq-plancherel-reformulation}\]
In particular, we note $E_w(f,f)=\pl h*f\ind\sigma_2^2$. Therefore, to prove \cref{thm-counterexample}, it suffices to prove the following.

\begin{proposition}\label{prop-reform}
    For any $C^2$ hypersurface $\srf\subset\R^d$, there exists for each $R\geq1$, a function $f\in L^2(\srf;\ind\sigma)$ and a function $h:\R^d\to\C$ with $\hat h\subset B_R$, so that the following holds.
        \[      \pl h*f\ind\sigma_2^2
            \gtrsim
                \log R
                \pl f_2{\srf;\ind\sigma}^2
                \pl\big.{P_\nu h}_{2}{L^1(\nu^\perp)}^2
            \]
    for every $\nu\in\mathbb S^{d-1}$
\end{proposition}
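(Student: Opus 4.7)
The plan is to build the pair $(f, h)$ as a superposition of wave packets at $\log R$ distinct scales, exploiting the curvature of $\srf$. Since $\srf$ is compact, $C^2$, and not contained in a hyperplane, I would first pass to a small cap of $\srf$ on which, after an affine change of variables, $\srf$ is the graph of a function with nondegenerate Hessian at the origin; all subsequent constructions take place in this cap. The key flexibility we have (compared to the original Mizohata--Takeuchi formulation) is that we work with $h$ instead of $w = |\hat h|^2$: by choosing $h$ oscillatory we can make $\pl{P_\nu h}_{2}{L^1(\nu^\perp)}^2$ smaller than the X-ray norm of $w$ itself, so cancellation in $h$ is a tool.

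For the construction, for each dyadic scale $r_k = R \cdot 2^{-k}$ with $k = 0, 1, \ldots, \log_2 R$, I would assign a family of caps on $\srf$ of diameter $r_k^{-1/2}$ whose normal directions form a one-parameter family along a tangent curve (this uses the nondegenerate curvature). Then $f$ is a weighted sum of bumps, one per cap across all scales, normalized so that $\|f\|_{L^2(\srf;\ind\sigma)}^2$ sums the $L^2$ contributions across scales cleanly. Simultaneously, $h$ is built as a superposition of modulated bumps $e^{2\pi i x_k \cdot \xi}\psi_k(\xi)$ with the $x_k$ chosen inside $B_R$ (so $\hat h$ is automatically supported in $B_R$), and with $\psi_k$ chosen so that $h*(f\,\ind\sigma)$ adds coherently at $\log R$ selected points of $\srf$.

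The main estimates split in two. For the lower bound, by Plancherel $\|h*f\,\ind\sigma\|_2^2 = \int |\hat h|^2\,|\ext f|^2\,\ind x$, and I would show that each of the $\log R$ scales contributes a comparable amount to this integral; the curvature at each scale $r_k$ causes the corresponding wave packets to align in phase on the intended region. For the upper bound, for each $\nu \in \mathbb S^{d-1}$ the quantity $\pl{P_\nu h}_{2}{L^1(\nu^\perp)}^2$ must be bounded uniformly in $\nu$; heuristically, the one-parameter curving of the normal directions prevents any single direction $\nu$ from simultaneously capturing the mass of wave packets from more than $O(1)$ scales.

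The main obstacle is precisely the $\nu$-uniform upper bound. A direction-by-direction estimate could in principle allow some ``bad'' $\nu$ to witness the alignment of wave packets from many scales at once, which would immediately destroy the $\log R$ gain by concentrating the X-ray norm along that direction. The incidence geometry lemma announced in \cref{subsec-lemma} is precisely the tool needed to rule this out: it should yield a combinatorial bound on the number of wave packets whose support meets any single line in direction $\nu$, thereby forcing $\pl{P_\nu h}_{2}{L^1(\nu^\perp)}^2$ to stay $O(1)$ uniformly in $\nu$ while $\|h*f\,\ind\sigma\|_2^2$ picks up the full $\log R$.
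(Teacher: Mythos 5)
You have correctly identified the broad strategy — exploit the freedom to choose the \emph{amplitude} $h$ with cancellation (rather than working with $w=|\hat h|^2$ directly), and use an incidence lemma to get the $\nu$-uniform upper bound on $\pl{P_\nu h}_{2}{L^1(\nu^\perp)}^2$. You also correctly identify the $\nu$-uniformity as the main obstacle. However, the construction you propose, and the mechanism you rely on for the $\log R$ gain, are genuinely different from the paper's and, as described, do not close.

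The gap is in the lower bound. You propose that $h$ be a superposition of $\sim\log R$ modulated bumps, one per dyadic scale $r_k$, with ``phase alignment'' producing a $\log R$ gain. But a rough accounting shows this cannot work: if $h$ has only $\sim\log R$ components and $f$ has $\sim\log R$ caps, then under reasonable normalizations both $\pl{h*f\ind\sigma}_2^2$ and $\pl f_2{\srf;\ind\sigma}^2$ scale like $\log R$, and the ratio gives no gain. The paper instead takes all $N\sim\log R$ bumps $S_i$ of $f$ at the \emph{single} scale $R^{-1}$ (no multiscale wave packet decomposition at all), but takes $h$ to be supported near an \emph{exponentially large} combinatorial lattice
\[Q=\Big\{\vec c\cdot\vec\xi\,:\,\vec c\in\{0,1\}^N,\ \textstyle\sum_i c_i=\lfloor N/2\rfloor\Big\},\qquad |Q|\sim 2^N N^{-1/2}.\]
The crucial mechanism is a combinatorial stacking, not phase alignment: the $N|Q|$ translates $\xi_i+q$ ($q\in Q$, $1\le i\le N$) are forced by the subset-sum structure of $Q$ to land (up to a constant fraction) in a set of only $\sim|Q|$ locations, so the bumps pile up with multiplicity $\sim N$. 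Because $L^2$ is quadratic, this gives $\pl{h*f\ind\sigma}_2^2\gtrsim N^2|Q|R^d$, while $\pl f_2{\srf;\ind\sigma}^2\sim NR^{d-1}$ and (using the incidence lemma, which bounds the number of balls of $h$ on any hyperplane by $O(1)$) $\pl{P_\nu h}_{2}{L^1(\nu^\perp)}^2\lesssim R|Q|$. The $N^2$ in the numerator against the single $N$ in $\pl f_2^2$ is the entire gain; it is invisible in your proposal because you never introduce an object of exponential size that can be reused across all $N$ translates. Without such a structure, ``aligning in phase at $\log R$ points'' only rearranges an $O(\log R)$ amount of mass and cannot beat the $\pl f_2^2\sim\log R$ in the denominator.

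A secondary discrepancy: the caps of diameter $r_k^{-1/2}$ and the family of normal directions curving along a tangent curve suggest you are picturing a wave packet/parabolic rescaling construction at varying scales. The paper's points $\xi_n$ are indeed spaced dyadically (modelled on the moment curve) to make the incidence lemma work, but the balls are all of the same radius $R^{-1}$; the role of curvature is purely to guarantee that the $\nu$-projections of the $\xi_n-\xi_0$ fall into distinct dyadic intervals for every $\nu$, not to create constructive interference across scales.
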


In light of \cref{prop-reform}, we are led to consider partial progress towards \cref{conjecture-refine-mt} in the following form.

\begin{conjecture}
    For any $C^2$ hypersurface $\srf\subset\R^d$ with surface measure $\ind\sigma$ and any functions $f\in L^2(\srf;\ind\sigma)$ and $h:\R^d\to\C$ with $\hat h\subset B_R$, the following holds
        \[      \pl h*f\ind\sigma_2^2
            \lesssim_\eps
                R^\eps
                \pl f_2{\srf;\ind\sigma}^2
                \pl\big.{P_\nu h}_{2}{L^1(\nu^\perp)}^2
            \]
    for every $\nu\in\mathbb S^{d-1}$
\end{conjecture}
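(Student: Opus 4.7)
The plan is to interpret the stated inequality as the Fourier-side incarnation of the Local Mizohata--Takeuchi conjecture (\cref{conjecture-refine-mt}). Indeed, by Plancherel, writing $w=|\hat h|^2$, one has $\|h*f\,\ind\sigma\|_2^2=\int_{\R^d}w\,|\ext f|^2$; the hypothesis $\hat h\subset B_R$ forces $w$ to be spatially supported in $B_R$, so the left-hand side is literally $\int_{B_R}|\ext f|^2 w$. Moreover, by \cref{hy-xray-infty}, one has $\|X_\nu w\|_\infty\leq\|P_\nu h\|_{L^2(L^1(\nu^\perp))}^2$ for every $\nu$, so since the conjecture demands the bound with every $\nu$, the target is pointwise at least as strong as Local Mizohata--Takeuchi.

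The main line of attack is to adapt the refined decoupling argument of \cite{ciw-mt-24}, built on \cite{giow-refined-decoupling-20}. First, partition $\srf$ into $R^{-1/2}$-caps $\theta$ with normals $N(\sigma_\theta)$ and perform a wave packet decomposition $\ext f=\sum_{\theta,T}c_{\theta,T}\psi_T$, where each $T$ is a tube of dimensions $R^{1/2}\times\cdots\times R^{1/2}\times R$ oriented along $N(\sigma_\theta)$. Second, use orthogonality together with a broad/narrow split to dominate $\int w|\ext f|^2$ by a sum of the form $\sum_T |c_T|^2 \int_T w$. Third, pigeonhole tubes by direction class $\nu$, and for each such class apply \cref{hy-xray-infty} to bound $\sum_{T\parallel\nu}\int_T w$ in terms of $\|P_\nu h\|_{L^2(L^1)}^2$ and the number of tubes. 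Fourth, control the resulting tube counts via a refined decoupling inequality at the critical exponent $\frac{2(d+1)}{d-1}$, summing across direction classes with Cauchy--Schwarz and the $L^2$-orthogonality of the $f_\theta$.

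The main obstacle is that the conjecture is essentially Local Mizohata--Takeuchi, and the strategy just sketched---which is the CIW argument transported to the Fourier side---will at best reproduce their $R^{(d-1)/(d+1)+\eps}$ loss. Guth's barrier \cite{guth-barrier-22} indicates that any approach built purely on decoupling axioms cannot push past this exponent. To reach $R^\eps$, one must exploit genuinely new structure; the most promising source of slack is the observation that \cref{hy-xray-infty} is sharp only when $h\geq 0$, so when $h$ is highly oscillatory the norm $\|P_\nu h\|_{L^2(L^1)}^2$ is strictly larger than $\|X_\nu w\|_\infty$. Converting this phase-space slack into a usable gain---perhaps in the spirit of the time-frequency framework of \cite{bgno-phase-space-24}---would plausibly be the key step, but extracting a concrete estimate from it is the serious obstacle to the whole program.
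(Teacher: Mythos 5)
The statement you have been asked to prove is posed in the paper as a \emph{conjecture}, not a theorem. It appears immediately after \cref{prop-reform} with the framing that the author is ``led to consider partial progress towards \cref{conjecture-refine-mt} in the following form,'' and the paper leaves it entirely open: the rest of Section \ref{sec-proof-thm-counter} uses the Plancherel reformulation and the X-ray estimate to build a \emph{counterexample} to the un-localized Mizohata--Takeuchi conjecture, not to establish this local Fourier-side variant. There is therefore no paper proof to compare your proposal against.

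Your opening observations do match the paper's setup. The identity $\|h*f\,\ind\sigma\|_2^2=\int_{B_R}|\ext f|^2 w$ with $w=|\hat h|^2$ is exactly \cref{eq-plancherel-reformulation}, and the appeal to \cref{hy-xray-infty} to tie $\|P_\nu h\|_{L^2(L^1(\nu^\perp))}^2$ to $\|X_\nu w\|_\infty$ is the intended mechanism linking the two formulations. Beyond that, the proposal is a strategy sketch, not a proof: the CIW-style refined-decoupling route you outline is, as you concede, expected to top out at an $R^{(d-1)/(d+1)+\eps}$ loss, and the suggestion to harvest the slack in Hausdorff--Young and Minkowski when $h$ is oscillatory is a heuristic with no concrete estimate attached. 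You are candid that this is where the program stalls, so the proposal does not (and does not claim to) establish the conjecture.

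One imprecision worth flagging: the claim that the target is ``pointwise at least as strong as Local Mizohata--Takeuchi'' does not follow from $\|X_\nu w\|_\infty\le\|P_\nu h\|_{L^2(L^1(\nu^\perp))}^2$. Since the conjectured bound is demanded for \emph{every} $\nu$, its effective right-hand side is $\inf_\nu\|P_\nu h\|_{L^2(L^1(\nu^\perp))}^2$, whereas Local Mizohata--Takeuchi involves $\sup_\ell\int_\ell w=\sup_\nu\|X_\nu w\|_\infty$. The per-$\nu$ inequality $\|X_\nu w\|_\infty\le\|P_\nu h\|_{L^2(L^1(\nu^\perp))}^2$ compares the two quantities at the same $\nu$ only, and does not settle the relation between the infimum of one over $\nu$ and the supremum of the other; in the absence of a positive-definite square root $h$ (where \cref{hy-xray-infty} becomes an equality) neither implication between the two conjectures is immediate from the X-ray estimate alone.
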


To help the reader understand the construction of the counterexample, we present first a ``white lie'' version.

\subsection{A white lie construction}\label{subsec-wh}
Throughout this section, we use $A\approx B$ to mean that $A$ and $B$ are \textit{essentially} equal, in the sense that $B$ is a suitable approximation of $A$ in some sense that we will formalize in a later section. The reader may feel free to assume $A=B$ when verifying estimates, even though $A=B$ is generally false. The point of this notation is to avoid excess detail at the expense of rigor.

Let $\xi_1,\cdots,\xi_N\subset\srf$ be a $R\inv$-separated collection of $N\sim\log R$ points in $\srf$. We will choose the $\xi_i$ later on. Let us set $S_i=\pl{1_{B_{R\inv}(\xi_i)\cap\srf}}_1{\srf;\ind\sigma}\inv1_{B_R(\xi_i)\cap\srf}\ind\sigma$ and
    \[f=\sum_{i=1}^NS_i.
    \label{def-f-white-lie}\]
We record the following for future use:
    \[R^{-d+1}\pl f_2{\srf;\ind\sigma}^2\sim\pl f_1{\srf;\ind\sigma}\sim N
    \label{f-norms-white-lie}\]
The key step is the construction of the following lattice:
    \[Q=\bigg\{\vec c\cdot\vec\xi:\vec c\in\{0,1\}^N,c_1+\cdots+c_N=\floor{\frac N2}\bigg\}\]
Here, $\vec c\cdot\vec \xi$ denotes $c_1\xi_1+\cdots+c_N\xi_N$ by abuse of notation. We now define
    \[h=R^d\sum_{q\in Q}1_{B_{R\inv}(q)}
    \label{def-h-white-lie}\]
Note that $|Q|={N\choose{\floor{\frac N2}}}\sim N\ief122^N$ by Stirling's approximation for the factorial.

The choice of scale $R\inv$ reflects the locally constant principle:
\begin{whitelie}\label{wh-local-const}
    We have $\text{supp}(\hat h)\subset B(R)$.
\end{whitelie}

We would like a lower bound on $\pl f\ind\sigma*h_2^2$. The operation $h\mapsto f\ind\sigma*h$ sends each ball in \cref{def-h-white-lie} to the sum of the translations of that ball by each of $\xi_1,\cdots,\xi_N$. We will need an estimate of the following type.
\begin{whitelie}\label{wh-convo}
    We have $S(\xi_i)*1_{B(R\inv)}\approx R^{d}1_{B(\xi_i,R\inv)}$
\end{whitelie}
With this, we are free to write
    \[h*f\approx\sum_{i=1}^N\sum_{q\in Q}R^d1_{B(\xi_i+q,R\inv)}
    \label{eq-h-f-white-lie}\]
Now, note that $\geq\frac12$ of the possible values of $\xi_i+q$ satisfy $c_i=0$, if $q=\vec c\cdot\vec\xi$. Therefore, at least $\frac12$ of the values of $\xi_i+q$ will lie in the lattice
    \[Q'=\bigg\{\vec c\cdot\vec\xi:\vec c\in\{0,1\}^N,c_1+\cdots+c_N=\floor{\frac N2}+1\bigg\},\]
which is a set of size at most ${N\choose\floor{\frac N2}+1}\sim|Q|$. Since there are a total of $N|Q|$ balls in \cref{eq-h-f-white-lie}, where each ball has an $L^2$ norm of $R\ef d2$, we have
    \[\pl h*f_2^2\gtrsim N^2|Q|R^d
    \label{est-lower-hf-white-lie}\]
We would now like to estimate $\pl\big.{P_\nu h}_{2}{L^1(\nu^\perp)}^2$. We recall
    \[\pl\big.{P_\nu h}_{2}{L^1(\nu^\perp)}^2:=\int_\R\pl{P_\nu h(\lambda)}_1^2\ind\lambda\]
Let $K_\nu(\lambda)$ denote the number of balls in the definition \cref{def-h-white-lie} of $h$ that pass through the plane $\lambda\nu+\nu^\perp$, so that $\pl{P_\nu h(\lambda)}_1\lesssim K_\nu(\lambda)R$. We deduce that $\pl{P_\nu h(\lambda)}_1^2\lesssim R^2\pl{K_\nu}_2^2$. We use $\pl{K_\nu}_1\sim R\inv 2^N$ to obtain
    \[\pl{P_\nu h(\lambda)}_1^2\lesssim \pl{K_\nu}_1\pl{K_\nu}_\infty\sim R2^N\pl{K_\nu}_\infty
    \label{length-estimate-white-lie}\]
Combining \cref{f-norms-white-lie}, \cref{est-lower-hf-white-lie}, and \cref{length-estimate-white-lie}, we obtain
    \[\pl h*f\ind\sigma_2^2\pl{K_\nu}_\infty\gtrsim N\pl f_2^2\pl{P_\nu h(\lambda)}_1^2
    \label{counterexample-white-lie}\]
We will show that $\pl{K_\nu}_\infty\lesssim1$ for every $\nu$, for some choice of $\{\xi_1,\cdots,\xi_N\}$ in \cref{main-lemma} below, i.e. no plane passes through more than $\sim 1$ balls in the definition \cref{def-h-white-lie} of $h$. The ``white-lie'' proof is now complete, contingent upon the following lemma.

\subsection{An incidence geometry lemma}\label{subsec-lemma}
    \begin{lemma}\label{main-lemma}
    For any $R>1$ sufficiently large, there exists
        \begin{enumerate}
            \item[$\circ$] A point $\xi_0\in\srf$
            \item[$\circ$] A set of $N\sim\log R$ points $\{\xi_1,\cdots,\xi_N\}\subset\srf$
        \end{enumerate}
        so that the following conditions are met.
        \begin{lemenum}
            \item\label{plane-infty-main-lemma}
                No plane passes through more than $2^{d-1}$ balls in the collection of balls of radius $R\inv$ around the points $c_1(\xi_1-\xi_0)+\cdots+c_N(\xi_1-\xi_0)$ for $\vec c\in\{0,1\}^N$.
            \item\label{xi-separated-main-lemma}
                We have $\abs{\xi_m-\xi_n}>\scale$ for every $(m,n)\in[N]^2$.
        \end{lemenum}
\end{lemma}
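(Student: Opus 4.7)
Since $\srf$ is $C^2$ and not contained in any hyperplane, the second fundamental form of $\srf$ cannot vanish identically (otherwise the Gauss map would be constant and $\srf$ would lie in a hyperplane). The plan is to choose $\xi_0\in\srf$ at which the second fundamental form is nonzero, parametrize a neighborhood of $\xi_0$ as a graph $\xi(u)=\xi_0+(u,\phi(u))$ over its tangent plane, $u\in B_\rho\subset\R^{d-1}$, with $\phi(0)=0$, $\nabla\phi(0)=0$, and $\phi$ having a nonzero Hessian at $0$ in some direction. The $\xi_i$ are then chosen from this graph; writing $v_i:=\xi_i-\xi_0$, each subset-sum point decomposes as $(\sum_{i\in S}u_i,\sum_{i\in S}\phi(u_i))$, and it is the nonlinear coupling between the two components (forced by $\phi$) that prevents all subset sums from lying in a single hyperplane.

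An affine hyperplane $H=\{x:\nu\cdot x=c\}$ captures a subset sum $\sum_{i\in S}v_i$ within distance $R^{-1}$ precisely when $\abs{\nu\cdot\sum_{i\in S}v_i-c}\leq R^{-1}$. Setting $a_i:=\nu\cdot v_i$, \cref{plane-infty-main-lemma} reduces to bounding by $2^{d-1}$, uniformly in $(\nu,c)$, the number of subset sums of $a_1,\dots,a_N$ in an interval of length $2R^{-1}$. The bound $2^{d-1}$ is natural: when $d-1$ of the $v_i$'s lie in $\nu^\perp$, already $2^{d-1}$ subsets (those supported on these indices) sum to zero in the $\nu$-direction, saturating the bound. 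We therefore reduce \cref{plane-infty-main-lemma} to a quantitative non-degeneracy: any $d$ of the $v_i$'s should satisfy $\abs{\det(v_{i_1},\dots,v_{i_d})}\gtrsim R^{-1}$, together with a Sidon-like genericity on the remaining $a_i$'s to rule out accidental coincidences in the resulting Littlewood-Offord problem.

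I would construct the $\xi_i$ inductively. Having chosen $\xi_0,\xi_1,\dots,\xi_k$ satisfying the conclusions at stage $k$, pick $\xi_{k+1}\in\srf\cap B_\rho$ avoiding (a) the $k$ balls of radius $R^{-1}$ around previously chosen $\xi_j$ (ensuring \cref{xi-separated-main-lemma}), and (b) the union of codimension-$\geq 1$ subvarieties of $\srf$ on which adding $\xi_{k+1}$ would create a new hyperplane-coincidence of $2^{d-1}+1$ subset sums. Each constraint in (b) carries an $R^{-1}$-neighborhood of measure $O(R^{-1})$ in $\srf$, and the number of such constraints at step $k$ is polynomial in $2^k\leq R$, so for $N\sim\log R$ the cumulative measure lost is much less than the measure of $\srf\cap B_\rho$ and a valid $\xi_{k+1}$ exists. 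The principal obstacle is converting qualitative genericity into the uniform $R^{-1}$-tolerance required by \cref{plane-infty-main-lemma}: the parameter space of affine hyperplanes is $d$-dimensional and contains $\sim R^d$ essentially distinct cells at scale $R^{-1}$, and one must use the nondegenerate second fundamental form at $\xi_0$ to bound, cell by cell, the measure of configurations that produce a coincidence.
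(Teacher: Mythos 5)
Your reduction of \cref{plane-infty-main-lemma} to an anticoncentration statement for the subset sums of $a_i=\nu\cdot(\xi_i-\xi_0)$, uniformly in $\nu$ and the offset, is exactly the right way to read the lemma, and the heuristic about $d-1$ directions giving $2^{d-1}$ unavoidable coincidences is correct. But the inductive/generic construction in your third paragraph has a genuine gap that you partially flag and do not resolve: the union bound over constraints does not close. At step $k$ there are on the order of $\binom{2^{k}}{d}$ (or worse, $\binom{2^{k+1}}{2^{d-1}+1}$) combinatorial configurations that could produce a new hyperplane-coincidence, each contributing a bad set of measure $\sim R^{-1}$ for $\xi_{k+1}$. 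With $N\sim\log R$, this number is $\text{poly}(R)$ with exponent at least $d$, so the cumulative bad measure is $\gtrsim R^{d-1}$, which dwarfs $|\srf\cap B_\rho|\sim 1$. Equivalently, viewed through your ``$\sim R^{d}$ cells of hyperplanes at scale $R^{-1}$'' remark: a single step of avoidance only removes one $R^{-1}$-slab per cell, and $R^{d}\cdot R^{-1}\gg 1$. Your suggestion to ``use the nondegenerate second fundamental form cell by cell'' is where the real work would have to happen, and the proposal does not say how this rescues the count.

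The paper's proof takes an entirely different, deterministic route precisely to sidestep this union bound. It places $\xi_n$ (up to a $C^2$ error that \cref{lemma-srf-sufficient} controls) on the moment curve $\mathcal M_d(t)=(t,\dots,t^d)$ at geometrically decaying parameters $t_n=c^{-n}$ with $c$ large, so that for \emph{every} direction $\nu$ the projections $\pi_\nu(\xi_n-\xi_0)$ fall into almost entirely distinct dyadic intervals (\cref{lemma-pigeonholing}). Uniformity in $\nu$ is obtained structurally, not by union-bounding over $\nu$: one shows a hyperplane $\nu^\perp$ meets at most $d-1$ of the boxes $U^d_{b,c,n}$ by pushing forward through the projective map $\varphi(x_1,\dots,x_d)=(x_2,\dots,x_d)/x_1$, which commutes with the moment-curve scaling and sends hyperplanes to hyperplanes, giving an induction on $d$; the $d-1$ ``bad'' indices enter as one allowed failure per dimension reduction. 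Once the lacunarity of the $a_n$ is in hand, the final subset-sum argument (\cref{subsec-prove-lemma}) is the easy triangle-inequality part. So while your first two paragraphs correctly identify the target inequality, the proposed construction does not establish the crucial uniform-in-$\nu$ lacunarity, and a random/greedy choice will not produce it.
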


We note that $Q$ is a translate of $\{c_1(\xi_1-\xi_0)+\cdots+c_N(\xi_1-\xi_0):\vec c\in\{0,1\}^N,\sum_{i=1}^Nc_i=\floor{\frac N2}\}$ and so the point $\xi_0$ does not affect our estimates.

The proof of this Lemma is rather involved and may be of independent interest. We postpone the proof to Section \cref{sec-incidence}.

\subsection{A rigorous proof of Proposition \ref{prop-reform}}\label{subsec-proof}
We now present a rigorous proof of \cref{prop-reform} by the means described in Section \cref{subsec-wh}.

Let $\xi_1,\cdots,\xi_N$ with $N\sim\log R$ be a collection of points that satisfy the conclusion of \cref{main-lemma}. Let $\eta$ be so that $\hat\eta$ is a smooth cutoff function with $1_{B(C\inv)}\leq\hat\eta\leq 1_{B(C)}$ for some constant $C>1$ to be determined later, so that $\hat\eta$ is supported in a ball of radius $\sim 1$; let $\eta_R(\xi)=R^d\eta(R\xi)$ be a rapidly decaying kernel at scale $R\inv$. For convenience, we assume that $\eta$ is radially symmetric. We set
    \[h_0=\sum_{p\in Q}\delta_p,
        \label{def-h0}\]
and choose $h=h_0*\eta_R$ so that $\hat h\subset B_{CR}$. Note that $\pl{h_0}_1\sim\pl h_1\sim|Q|$ and $\pl h_2^2\sim|Q|R^d$.

We set $f_i=\sum_iS_i$ as in the previous section. We would like to show the following estimate.
    \[\pl f\ind\sigma*h_2^2=\pl f\ind\sigma*{h_0}*{\eta_R}_2^2\gtrsim N^2|Q|R^d
        \label{est-fh}\]
We have
    \begin{align}
            \pl f*{h_0}*{\eta_R}_2^2
        =&
            \int_{\srf\times\srf}\sum_{i,j=1}^N\sum_{q,q'\in Q}S_i(\sig)S_j(\sig')[\eta_R*\eta_R](\sig-\sig'+q-q')\ind\sigma(\sig)\ind\sigma(\sig')\label{tail-bound}
    \end{align}
Note that $\eta_R*\eta_R=\widecheck{(\widehat{\quad}\!\!\!\!\!\!\eta_R)^2}$ is another rapidly decaying kernel at scale $R\inv$. We morally have $\eta_R*\eta_R\lesssim1_{B_{2R\inv}}$ but the honest kernel has rapidly decaying oscillatory tails. If we set 
    \[H=\sum_{i,j=1}^N\sum_{q,q'\in Q}\delta_{\xi_i-\xi_j+q-q'}*T_{ij}\]
where $T_{ij}(\xi_j-\xi_i+\zeta)=S_i*\tilde S_j(\zeta)$, then we can rewrite \cref{tail-bound} as
    \[
        \pl f*{h_0}*{\eta_R}_2^2=\langle H,\eta_R*\eta_R\rangle,
    \]
Note that if $q$, $\xi_i$, and $\xi_j$ are randomly chosen, then there is a $\geq\frac14$ probability that $q+\xi_i-\xi_j$ will lie in $Q$. Therefore, we deduce 
    \[\int_{B_{R\inv}(0)}H\gtrsim N^2|Q|\label{big-mass-0}\]

Now, note that we can deduce that that points in $Q$ are separated by at least $R\inv$ from \cref{main-lemma}, in the sense that every $R\inv$-ball contains $\lesssim1$ of the points (in fact, their $\nu$-projections are separated). Since $\pl{T_{ij}}_1\sim 1$, $T_{ij}$ is nonnegative, and $T_{ij}$ is supported in $B_{2R\inv}(0)$, we deduce that
    \[\int_{B_{R\inv}(\zeta)}H\lesssim N^2|Q|.\label{small-mass-zeta}\]
for any $\zeta\in\R^d$.

Combining \cref{big-mass-0} and \cref{small-mass-zeta}, we conclude that \cref{est-fh} holds for a suitable choice of $\eta$ so that $\eta*\eta\geq B_1(0)$ and $\int_{\R^d}(\eta*\eta)_-<c$ for some sufficiently small $c$ (here, $\eta*\eta=(\eta*\eta)_+-(\eta*\eta)_-$ is a difference of nonnegative functions).

To estimate $\pl\big.{P_\nu h}_{2}{L^1(\nu^\perp)}^2$, we note that the method from Section \cref{subsec-wh} must be modified slightly, since $\eta$ has a rapidly decaying tail, and therefore we expect some interference between the different ``layers'' of $Q$. More concretely, we write
    \[\pl{P_\nu h(\lambda)}_1\lesssim \int_\R K_\nu(\lambda')\mu_R(\lambda-\lambda')\ind\lambda'=K_\nu*\mu_R(\lambda)\]
where $K_\nu$ is defined as in Section \cref{subsec-wh} and $\mu_R(\lambda)=R\mu(R\lambda)$ is some rapidly decaying smooth kernel at scale $R\inv$. We write
    \begin{align}
        \pl{P_\nu h}_2{L^1(\nu^\perp)}^2
    \lesssim&
        \pl{RK_\nu*\mu_R}_2^2
    \\\leq&
        R^2\pl{K_\nu}_1\pl{K_\nu}_\infty\pl{\mu_R}_1^2
    \\\lesssim&
        R^2\pl{K_\nu}_1\pl{K_\nu}_\infty
    \\\lesssim&
        R|Q|
    \end{align}
This completes the proof.

\section{The proof of Lemma \ref{main-lemma}}\label{sec-incidence}
Let $\srf\subset\R^d$ be a compact $C^2$ hypersurface that is not a subset of a plane. We will see in section \cref{subsec-prove-lemma} that \cref{main-lemma} will follow easily from the following Lemma.

\def\jop#1\nu{\pi_\nu\p{#1}}
\def\jopOld #1,\nu;{\pi_\nu\p{#1}}
\def\jopOldWeird ,#1;{\pi_\nu\p{#1}}
\begin{lemma}\label{lemma-pigeonholing}
    For any $R>1$ sufficiently large, there exists
    \begin{enumerate}
        \item[$\circ$] A point $\xi_0\in\srf$
        \item[$\circ$] A set of $N\sim\log R$ points $\{\xi_1,\cdots,\xi_N\}\subset\srf$
    \end{enumerate}
    so that the following conditions are met.
    \begin{lemenum}
        \item\label{pigeonholing-condition}
            For any direction $\nu\in\mathbb S^{d-1}$, the $\nu$-projections of the differences $\xi_m-\xi_1$ lie in nearly distinct dyadic intervals; that is, if $\mathfrak u_n$ denotes the part of the partition $\upart^1(1)$ that contains $\log_2(\abs{\jopOldWeird,\xi_n-\xi_0;})$, then we have
                \[\mathfrak u_m=\mathfrak u_n\text{ and }m>n\implies n\in S_\nu
                \label{pigeons!}\]
            where $S_\nu$ is called the set of bad values of $n$. The size of $S$ is at most $d-1$.
        \item\label{xi-separated}
            We have $\abs{\xi_m-\xi_n}>\scale$ for every $(m,n)\in\{1,N\}^2$.
    \end{lemenum}
\end{lemma}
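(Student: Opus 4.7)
I would realize $\xi_0,\xi_1,\dots,\xi_N$ as points along a non-degenerate curve $\gamma\subset\srf$ with geometrically decreasing parameter values. Working in local graph coordinates at a point $\xi_0\in\srf$ where the second fundamental form is nonzero (such a point exists because $\srf$ is not contained in any hyperplane), write $\srf=\{(u,\phi(u))\}$ with $\phi(0)=0$, $\nabla\phi(0)=0$, and $\phi''(0)\neq 0$. Then I would parameterize a curve in $\srf$ as $\gamma(t)=(\sigma(t),\phi(\sigma(t)))$, where $\sigma\colon[0,\delta]\to\R^{d-1}$ is ``moment-like'' (e.g.\ $\sigma(t)=(t,t^2,\dots,t^{d-1})$ in a suitable basis), and set $\xi_n:=\gamma(2^{-n}\delta)$ for $n=1,\dots,N$ with $N\sim\log R$.

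\textbf{Separation.} The condition $|\xi_m-\xi_n|>\scale$ follows immediately from the geometric spacing $|t_m-t_n|\gtrsim 2^{-\max(m,n)}\delta\gtrsim\scale$ combined with bi-Lipschitz control on $\gamma$, once $\delta$ is fixed and $N$ is taken so that $2^{-N}\delta\gtrsim\scale$.

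\textbf{Pigeonhole analysis.} For any $\nu\in\mathbb{S}^{d-1}$, the scalar projection $f_\nu(t):=\pi_\nu(\gamma(t)-\xi_0)$ admits, to leading order in $t$, a polynomial expansion of degree at most $d-1$ whose coefficients are essentially the components of $\nu$ in the basis generated by the Taylor jet of $\gamma$. Non-degeneracy of the moment-like $\sigma$ together with $\phi''(0)\neq 0$ ensures at least one non-trivial coefficient is bounded below in magnitude uniformly in $\nu$. Rolle's theorem then limits $f_\nu$ to at most $d-1$ positive real roots; away from these roots, $\log_2|f_\nu(t)|$ is essentially monotone decreasing in $-\log_2 t$, so $\lfloor\log_2|f_\nu(t_n)|\rfloor$ strictly decreases with $n$ in each inter-root interval and no collisions occur. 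A collision $\mathfrak u_m=\mathfrak u_n$ with $m>n$ can therefore arise only near one of the $\leq d-1$ roots, where $f_\nu$ dips and then recovers, contributing at most one bad index $n$ per root. This yields $|S_\nu|\leq d-1$.

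\textbf{Main obstacle.} The delicate step is justifying the ``polynomial-like'' behavior of $f_\nu$ given only $C^2$ regularity of $\srf$: the contribution $\nu_d\phi(\sigma(t))$ carries an $o(t^2)$ error that must be carefully compared against the dominant terms at each dyadic scale $t_n=2^{-n}\delta$, and a naive moment curve on $\srf$ may be degenerate for directions $\nu$ perpendicular to the span of the first few Taylor terms. A likely resolution is an iterative/pigeonholing construction in which each $\xi_n$ is selected to avoid a lower-dimensional bad set of positions that would create new dyadic collisions with the previously chosen points --- this is where the ``pigeonholing'' in the lemma's name enters. The bookkeeping required to quantify ``bad set'' at each stage, together with a careful estimate of how the $o(t^2)$ remainder of $\phi$ can perturb dyadic brackets at scale $2^{-n}\delta$, is the technical heart of the proof.
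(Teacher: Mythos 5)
Your proposal correctly identifies the essential structure of the construction: points along a moment-like curve in $\srf$ at geometrically decreasing parameter values, with a quadratic term coming from the curvature of $\srf$. But the route you sketch for bounding $|S_\nu|$ is substantially different from the paper's, and it contains a genuine gap at exactly the step you flag as the ``technical heart.''

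The paper's proof uses the moment curve $\mathcal M_d(t)=(t,\dots,t^d)$ directly, with base-$c$ spacing $t_n=c^{-n}$ for $c\gg1$ (not base $2$ --- one passes to base $2$ only at the very end by subsampling), and it replaces the points by carefully calibrated rectangular boxes $U^d_{b,c,n}$. The key idea, which your proposal does not touch, is the projection $\varphi(x_1,\dots,x_d)=(x_2,\dots,x_d)/x_1$, which sends hyperplanes to hyperplanes, commutes with the moment curve and its rescaling $L_c$, and therefore allows an induction on $d$. The increment of one ``bad index'' per dimension comes precisely because $\varphi(\nu^\perp)$ is no longer a hyperplane through the origin; a maximality argument identifies a single scale where $\varphi(\nu^\perp)$ can be comparable to $U^{d-1}_{b,c,n}$, and the remaining scales reduce to $\R^{d-1}$. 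Neither your ``one bad index per root of $f_\nu$'' nor Rolle's theorem appears.

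The gap in your route is the claim that near each root of $f_\nu$ the function ``dips and then recovers, contributing at most one bad index $n$ per root.'' Between roots, $\log_2|f_\nu|$ is not monotone in $-\log_2 t$, and the width (in $n$) of the dip depends on the geometry of the root, the magnitudes of the polynomial's coefficients (which vary with $\nu$, and can be extremely small), and critically on the base of the geometric spacing. With base $2$, consecutive monomials $t_n^j$ differ only by factors of $2$, so the ``transition'' windows where two monomials are comparable can span many values of $n$ --- this is exactly the failure mode the paper points out in its discussion preceding the proof (``these segments will overlap a lot because the lengths do not decay fast enough''). The paper avoids this by taking $c\gg b\gg 1$ so that the boxes are essentially disjoint, and it never invokes Rolle-type root counting. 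The paper's concluding remark explicitly acknowledges that a polynomial-partitioning route in the spirit of your Rolle argument exists, but warns that it requires delicate estimates on the shape of sign-change sets and ``it is very difficult to yield the sharp estimate of $d-1$ for the number of bad points using this approach.'' Your sketch does not supply those estimates.

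Finally, your conjectured ``iterative/pigeonholing construction in which each $\xi_n$ is selected to avoid a lower-dimensional bad set'' is not what the proof does: the points $\xi_n$ are chosen explicitly and non-adaptively on a moment-like curve, and no $\nu$-dependent or greedy selection is ever made. The name ``pigeonholing'' refers to the dyadic pigeonholing of the projections, not to an iterative point-selection process.
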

\begin{remark}\label{rmk-lem-pigeon}
    It is much easier to see that the lemma is true if we are allowed to choose different $\xi_n$ for different $\nu$; since $\srf$ does not lie in a hyperplane, it cannot project to a point, so $\pi_\nu(\srf)$ must contain some line segment from which we can choose $\xi_n$ dyadically.
\end{remark}
Before proving \cref{lemma-pigeonholing}, we explain the main ideas. First, the condition that $\srf$ is a hypersurface is not necessary as long as we assume that it does not project to a point and is $C^2$. We will also see that higher order corrections do not play an important role in the proof, so we might as well assume that $\srf$ is smooth. In this case, the desirable region of $\srf$ is best approximated by the \textit{moment curve} $\mathcal M_d$.
    \[\mathcal M_d(t)=(t,t^2,\cdots,t^d)
    \label{def-moment-curve}\]
That is, we should be able to construct $\xi_n\approx\mathcal M_d(t_n)$ for a nice choice of $t_n$ and most of the $\mathcal M_d(t_n)$ will nearly lie in $\srf$. Note that if $\nu=(\nu_1,\cdots,\nu_d)$ was close to $(1,\cdots,0)$, say $|\nu_1|>c>0$, then the projection $\jop{\mathcal M_d(t)}\nu$ would look like a line near $t=0$. Motivated by this, we might decide to set $t_n=2\ie n$ dyadically. This also works well if, e.g., $\nu=(0,1,\cdots,0)$ because then $\jop{\mathcal M_d(t_n)}\nu\sim2\ie{2n}$. It is remarkably harder to prove that the estimates are uniform over all possible choice of lines.

It is, in general, difficult to work with neighborhoods of the projections $\jop{\mathcal M_d(t_n)}\nu$ and it is usually preferable to work with neighborhoods of $t_n$ prior to projection. We might try setting $B_n=B(\mathcal M_d(t_n),2|\mathcal M_d(t_{n-1})|)$ so that $0\notin\jop{B_n}\nu$ implies that $\jop{\mathcal M_d(t_n)}\nu$ lies in a dyadic interval higher than $\mathcal M_d(|t_{n-1}|)$, and therefore higher than $\jop{\mathcal M_d(t_{n-1})}\nu$. The advantage here is that the problem reduces to showing something about how many balls lie on a hyperplane $\nu^\perp$ through the origin, which seems much more tractable.

Unfortunately, this approach fails. To see why, let us choose $t_n=c\ie n$ for some $c>1$, and set $d=2$ for definiteness, so that the $B_n$ are balls of radius $\sim c\inv|t_n|=c\ie1\ie n$. If we choose $\nu=(0,1)$, then the $B_n$ project to segments of length $\sim c\ie1\ie n$ a distance $\sim c\ie{2n}$ away from $0$. As $n$ gets large, these segments will overlap a lot because the lengths do not decay fast enough.

In light of the above setback, we might be temped to choose the $B_n$ to be rectangles instead. If we set $U_n=t_n+[-2c\e{-1-n},2c\e{-1-n}]\times[-2c\e{-2-2n},2c\e{-2-2n}]$, then we might hope that $\jop{U_n}\nu$ contains the origin when $\jop{t_n}\nu$ does not lie in a dyadic interval higher than $\jop{t_{n-k}}\nu$. That is, if $\frac{\jop{t_n}\nu}{\jop{t_{n-k}}\nu}\in[-2,2]$ for some $k>1$, we would like to see if $0\in\jop{U_n}\nu$. If $\jop{t_n}\nu=\alpha\jop{t_{n-k}}\nu$ for some $\alpha\in[-2,2]$, then we would like to show that $t_n-\alpha t_{n-k}\in U_n$ because we know that $\jop{t_n-\alpha t_{n-k}}\nu=0$. As it turns out, it is not hard to verify that $t_n-\alpha t_{n-k}\in U_n$ from the definition of $U_n$.

When $d>2$, the main idea of the proof is to use a projection $\varphi:\R^d\to\R^{d-1}$ to induct on $d$. We define $\varphi$ below.
    \[\varphi(x_1,\cdots,x_d)=\frac{(x_2,\cdots,x_d)}{x_1}
    \label{def-varphi}\]
We will see that $\varphi$ is useful because it reduces the dimension by $1$ and it sends hyperplanes in $\R^d$ to (affine) hyperplanes in $\R^{d-1}$. This is the point where the choice \cref{def-moment-curve} of the moment curve is essential because $\varphi\circ\mathcal M_d=\mathcal M_{d-1}$. After suitably defining the rectangles $U^d_n$ in dimension $d>2$, we will note that $0\in\jop{U^d_n}\nu$ is equivalent to the statement that the hyperplane $\nu^\perp$ that contains $0$ also passes through some point of $U^d_n$. This will then imply that $\varphi(\nu^\perp)$ meets some point of $\varphi(U^d_n)$. However, $\varphi(\nu^\perp)$ is just a hyperplane in $\R^{d-1}$ and we will see that $\varphi(U^d_n)$ is very close to $U_{n,d-1}$ and so the problem reduces to a $d-1$-dimensional problem.

There is one more complication which arises, namely that after applying $\varphi$, we are dealing with hyperplanes in general position. Before applying $\varphi$, we were only dealing with hyperplanes through the origin.\footnote{It is notable that this is the only reason that the number of ``bad'' $n$, i.e. the constant in \cref{pigeons!}, is dependent on $d$.} To remedy this, consider fixing a scale $\texttt d_n\sim|t_n|$ for some $n$. Note that either $\varphi(\nu^\perp)$ will be very far away from the origin, it will be very close to the origin, or it will be a moderate distance away from the origin. If $\varphi(\nu^\perp)$ is very far from the origin at scale $\texttt d_n$, then there is no risk of $\varphi(\nu^\perp)$ passing through $\varphi(U^d_n)$. On the other hand, if $\varphi(\nu^\perp)$ is very close to the origin at scale $\texttt d_n$, then we might as well assume that it meets the origin, in which case we have reduced the problem completely to the $n-1$-dimensional case. For the third case, namely that the distance from $\varphi(\nu^\perp)$ to the origin is the same scale as $\texttt d_n$, we are truly without hope. Luckily, it is not possible for very many $t_n$ to meet this fate since there are not many scales $\texttt d_n$ that are close to the distant from $\varphi(\nu^\perp)$ to the origin.

In view of the principles above, we are ready to formalize the proof.

\setcounter{subsection}2
\subsection{The proof of Lemma \ref{lemma-pigeonholing}}

We divide the proof into two lemmas. \cref{lemma-moment-curve} is the main argument for the moment curve and \cref{lemma-srf-sufficient} proves that \cref{lemma-pigeonholing} for an arbitrary curved $C^2$ hypersurface $\srf$ reduces to the moment-curve case.

Recall the definition of the moment curve.
\[\mathcal M_d(t)=(t,t^2,\cdots,t^d)
    \tag*{\cref{def-moment-curve}}\]
For any $c>1$, we set $t_{c,n}=c\ie n$ and $x_{c,n}=\mathcal M_d(t_{c,n})$. Throughout the proof, we will use the rescaling symmetry of the moment curve.
    \[\mathcal M_d(c\inv t)=L_c\circ\mathcal M_d(t)
    \label{moment-rescaling}\]
Here, $L_c(x)$ is the entry-wise multiplication of $x$ by $\mathcal M_d(c\inv)$. Notably, we have
    \[
        \label{symmetry-x}
            L_c(x_{c,n})\;=\;L_c(x_{c,n+1})
    \]
We also recall the definition of $\varphi:\R^d$.
    \[\varphi(x_1,\cdots,x_d)=\frac{(x_2,\cdots,x_d)}{x_1}
    \tag*{\cref{def-varphi}}\]
At the core of the argument is the fact that $\varphi$ and $L_c$ commute:
    \[\varphi\circ L_c=L_c\circ\varphi
    \label{lc-varphi-commute}\]
Here, we written $L_c$ for the corresponding scaling in $\R^{d-1}$ by abuse of notation. In fact, by a similar abuse of notation, all of $L_c$, $\mathcal M$, and $\varphi$ mutually commute.

Given any vector $v$, let us denote by $Q(v)$ the axially-oriented rectangular box centered at the origin, whose dimensions are encoded in the entries of the vector $2v$. We should imagine $v$ as a corner of the box. We now define the boxes $U^d_{b,c,n}$, where $b>1$ will be fixed later.
    \[U^d_{b,c,n}=x_{c,n}+Q(\mathcal M_d(bt_{c,n+1}))
    \label{def-ubcn}\]
In the notes following \cref{rmk-lem-pigeon}, we took $b=2$. To digest \cref{def-ubcn}, recall that we are looking for the smallest box so that translations $x_{c,n}+\alpha x_{c,k}$ remain in the box, for suitably defined $\alpha$.

Since $Q$ commutes with axial rescaling, the boxes $U^d_{b,c,n}$ are related by $L_c$.
    \[
        \label{symmetry-U}
            L_c(U^d_{b,c,n})\;=\;U^d_{b,c,n+1}
    \]
We are now ready to state and prove the following lemma.

\begin{lemma}\label{lemma-moment-curve}
Let $d$ be arbitrary. For $c\gg b\gg 1$ sufficiently large, we have the following.
    \begin{lemenum}
        \item For any direction $\nu\in\mathbb S^{d-1}$ and any $(n,k)\in\Z^2$ with $n<k$, the projections $\jopOld U^d_{b,c,n},\nu;$ and $\jopOld U^d_{b,c,k},\nu;$ do not overlap unless $n\in S_\nu$. Here, $S_\nu$ is called the set of bad $n$ and $|S_\nu|\leq d-1$.
        \item In particular, the values of $\log_c\jopOld c_{c,n},\nu;$ are distinct, except for a set of as most $d-1$ choices of $n$.
    \end{lemenum}
\end{lemma}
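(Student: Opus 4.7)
My plan is to give a direct proof, bypassing induction on $d$, by reducing the overlap condition to a relative-smallness inequality for the polynomial $f_\nu(t) := \nu\cdot\mathcal M_d(t) = \sum_{i=1}^d \nu_i t^i$, and then bounding the set of bad $n$ via the Newton polygon of $f_\nu$.

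First I would reformulate the overlap. Set $g_\nu(t) := \sum_i |\nu_i| b^i t^i$ and $\|f_\nu\|_t := \sum_i |\nu_i| t^i$. An overlap $\pi_\nu(U^d_{b,c,n}) \cap \pi_\nu(U^d_{b,c,k}) \neq \emptyset$ with $n<k$ yields points $y \in U^d_{b,c,n}$, $y' \in U^d_{b,c,k}$ with $\nu\cdot(y-y')=0$. Writing each point as $x_{c,\cdot}$ plus a box displacement bounded componentwise by $b^i c^{-i(n+1)}$ (resp.\ $b^i c^{-i(k+1)}$), the triangle inequality gives $|f_\nu(c^{-n}) - f_\nu(c^{-k})| \leq 2 g_\nu(c^{-n-1})$. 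Since $k \geq n+1$, elementary monomial comparisons give $|f_\nu(c^{-k})| \leq c^{-1}\|f_\nu\|_{c^{-n}}$ and $g_\nu(c^{-n-1}) \leq (b/c)\|f_\nu\|_{c^{-n}}$, so any bad $n$ must satisfy the relative-smallness bound $|f_\nu(c^{-n})| \leq 3(b/c)\|f_\nu\|_{c^{-n}}$.

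Next I would apply a Newton polygon analysis to count the $n$ satisfying this bound. Let $\{i_1<\cdots<i_m\}\subseteq\{1,\dots,d\}$ be the support of $\nu$, so $m\leq d$. In the interior of each Newton polygon segment --- at $t$ where one monomial $\nu_{i_j}t^{i_j}$ dominates $\|f_\nu\|_t$ by a factor $\geq 2d$ --- one has $|f_\nu(t)| \geq \tfrac12|\nu_{i_j}|t^{i_j} \geq \tfrac13\|f_\nu\|_t$, which exceeds $3b/c$ for $c$ large in terms of $d$. Hence badness forces $c^{-n}$ to lie in a multiplicative neighborhood of one of the $m-1\leq d-1$ transition scales $\tau_j$ defined by $|\nu_{i_j}|\tau_j^{i_j}=|\nu_{i_{j+1}}|\tau_j^{i_{j+1}}$. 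Writing $t=\tau_j(1+\varepsilon)$ and Taylor-expanding the two leading monomials $\nu_{i_j}t^{i_j}+\nu_{i_{j+1}}t^{i_{j+1}}$ (same signs produce no cancellation and hence no badness; opposite signs give cancellation linear in $\varepsilon$), one finds $|f_\nu(t)|/\|f_\nu\|_t \sim (i_{j+1}-i_j)|\varepsilon|/2$, so the bad window around $\tau_j$ is $|\varepsilon| \lesssim b/(c(i_{j+1}-i_j))$. This window has $\log_c$-length much less than $1$ for $c \gg b$ and so contains at most one value of $c^{-n}$. Summing over the at most $d-1$ transitions proves part (i); part (ii) follows from (i) by noting that if $\log_c|\pi_\nu(x_{c,n})|$ and $\log_c|\pi_\nu(x_{c,k})|$ differ by $O(1)$, then $|f_\nu(c^{-n})-f_\nu(c^{-k})|$ is small enough to force overlap of the projected boxes (after a harmless constant-factor inflation of $b$), so $n \in S_\nu$.

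The main obstacle I anticipate is quantitative control of the non-vertex monomial contributions to $f_\nu$ near each transition $\tau_j$: collectively these could in principle spoil the two-term approximation and inflate the bad $\varepsilon$-window beyond a single value of $c^{-n}$. I would address this with an explicit Newton-polygon convexity bound --- writing each ratio $|\nu_i|\tau_j^i/(|\nu_{i_j}|\tau_j^{i_j})$, for $i \notin \{i_j, i_{j+1}\}$, as a geometric decay in the polygon distance from the edge joining $i_j$ and $i_{j+1}$, and summing geometrically. The hypothesis $c \gg b$, implicitly with $c$ sufficiently large in terms of $d$, is precisely what ensures each Newton polygon transition contributes at most one bad $n$, securing the bound $|S_\nu| \leq d-1$.
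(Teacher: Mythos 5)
Your approach is genuinely different from the paper's. The paper proves Lemma~4.4 by induction on the dimension $d$, using the projective map $\varphi(x_1,\ldots,x_d) = (x_2,\ldots,x_d)/x_1$ to reduce the incidence problem about hyperplanes meeting nested boxes $U^d_{b,c,n}$ in $\R^d$ to the analogous problem in $\R^{d-1}$; the ``$+1$'' in the inductive loss comes from an extra bad scale that arises because $\varphi(\nu^\perp)$ need not pass through the origin. You instead dot everything with $\nu$ and reduce to the one-variable polynomial condition $|f_\nu(c^{-n})| \leq 3(b/c)\|f_\nu\|_{c^{-n}}$, which you then try to control by a Newton polygon count. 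Your opening reduction is correct and is a genuine simplification of the geometry: the triangle-inequality estimates $|f_\nu(c^{-k})|\leq c^{-1}\|f_\nu\|_{c^{-n}}$ and $g_\nu(c^{-n-1})\leq (b/c)\|f_\nu\|_{c^{-n}}$ check out. The paper's Remark~4.6 explicitly acknowledges that a polynomial-based argument of roughly this flavor exists, but cautions that it requires delicate estimates and makes the sharp bound $|S_\nu|\leq d-1$ hard to obtain; your proposal confirms that assessment.

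The gap is in the Newton polygon counting step, and it is a real one. You assert that badness forces $c^{-n}$ to lie near one of the $\leq d-1$ transition scales $\tau_j$, having first argued that ``in the interior of a Newton polygon segment'' the dominant monomial dominates $\|f_\nu\|_t$ by a factor $\geq 2d$. This is false in general: take $d=3$, $\nu=(1,-2,1)$, so $f_\nu(t)=t(1-t)^2$. The transition scales are $\tau_1=1/2$ and $\tau_2=2$, but $f_\nu$ vanishes to second order at $t=1$, squarely in the middle of the segment, where the dominant monomial $2t^2$ exceeds the others only by a factor $2$, not $2d$. The bad window is near $t=1$, not near either $\tau_j$. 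Relatedly, your proposed fix --- geometric decay of $|\nu_i|\tau_j^i/|\nu_{i_j}|\tau_j^{i_j}$ in ``polygon distance'' --- fails when a non-vertex point $(i,-\log|\nu_i|)$ lies arbitrarily close to the edge: that ratio is then arbitrarily close to $1$, and the two-term approximation at $\tau_j$ breaks down. In those cases three or more monomials are comparable at the transition, the local picture is genuinely a higher-degree cancellation, and the width of the bad window is governed by the multiplicity of the (possibly complex) root cluster rather than by the two-term linearization.

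The idea can probably be repaired, but not by transitions of the dominant monomial; the quantity to count is root clusters. A cleaner route is to bound the number of connected components of $\{t>0 : |f_\nu(t)| \leq (3b/c)\|f_\nu\|_t\}$ by the number of positive real roots of $f_\nu$ (with multiplicity) plus complex roots lying within a thin sector of the positive real axis, and observe that each such cluster of total multiplicity $\mu$ gives one window of multiplicative width $\lesssim (b/c)^{1/\mu}$, which is $<1$ once $c$ is large compared to $b$ and $d$. Bounding the number of clusters by $d-1$ (not just the number $i_m - i_1$ of nonzero roots) requires Descartes' rule of signs for the real roots and some extra care for the complex ones, which is exactly the ``delicate interaction with derivatives'' that the paper's remark warns about. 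So: the strategy is a viable alternative and the first half of your argument is right, but the Newton polygon step as written does not close, and the patch needs a root-counting mechanism that you have not yet supplied.
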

\begin{proof}
The proof is by induction on $d$. Note that the $d=1$ case is trivial.

The first step is to note that $\jopOld U^d_{b,c,n},\nu;$ and $\jopOld U^d_{b,c,k},\nu;$ overlap iff $\nu^\perp$ passes through a point of $U^d_{b,c,n}-U^d_{b,c,k}$. Note that all of the boxes $Q(\mathcal M_d(bt_{c,k+1}))$ in the definition \cref{def-ubcn} of $U^d_{b,c,n}$ will fit inside $Q(\mathcal M_d(bt_{c,n+1}))$. The points $x_{c,k}$ also fit inside this box, so we deduce the following.
    \[U^d_{b,c,n}-U^d_{b,c,k}\subset U^d_{b_1,c,n}
    \label{set-difference-bound}\]
Here, $b_1\sim b$ is some other scale. Therefore, it suffices to construct a set $S_\nu$ of bad points so that for $n\notin S_\nu$, the hyperplane $\nu^\perp$ will not meet $U^d_{b_1,c,n}$. This is equivalent to proving that $\varphi(\nu^\perp)$ does not meet $\varphi(U^d_{b_1,c,n})$ because $\varphi(U^d_{b_1,c,n})$ is well-defined if $c>b_1$.

We observe that \cref{symmetry-U} in dimension $d-1$ implies the following:
    \[\varphi(U^d_{b_1,c,n})=L_c^n(\varphi(U^d_{b_1,c,1}))
    \label{varphi-u-relation}\]
Note that $\varphi(U^d_{b_1,c,1})$ fits inside a box $U^{d-1}_{b_2,c,1}$ for some $b_2\sim b_1$ now that we are at scale $b_1\times\cdots\times b_1$. By \cref{symmetry-U} in dimension $d$ with \cref{varphi-u-relation}, we deduce that the box construction of $U$ nearly commutes with $\varphi$. That is,
    \[\varphi(U^d_{b_1,c,n})\subset U^{d-1}_{b_2,c,n}
    \label{nearly-commutes}\]
Therefore, it suffices to show that $\varphi(\nu^\perp)$ does not meet more than $d-1$ of the boxes $U^{d-1}_{b_2,c,n}$. We would be done by the induction hypothesis if $\varphi(\nu^\perp)$ were to pass through the origin, so we assume that $0\notin\varphi(\nu^\perp)$. Consider boxes $Q(\mathcal M_{d-1}(b_3t_{c,m+1}))$ and take $m_0$ to be maximal so that $\varphi(\nu^\perp)$ contains a point $l$ inside such a box. Here, $b_3\sim b_2$ is so that the boxes $U^{d-1}_{b_2,c,k}$ lie in $Q(\mathcal M_{d-1}(b_3t_{c,m+1}))$ for $k>m+1$. We observe that, for $n>m_0+1$, the hyperplane $\varphi(\nu^\perp)$ does not meet $U^{d-1}_{b_2,c,n}$ by maximality of $m_0$.

If $n=m_0+1$, then it does not matter if $\varphi(\nu^\perp)$ meets $U^{d-1}_{b_2,c,n}$ if we can show that at most $d-2$ of the values $n<m_0$ are bad.

If $n<m_0+1$, we observe that $\varphi(\nu^\perp)=l+\nu_2^\perp$, where $\nu_2$ is perpendicular to $\varphi(\nu^\perp)$. Thus, it suffices to show that no more than $d-2$ of the translated boxes $U^{d-1}_{b_2,c,n}-l$ meet $\nu_2^\perp$. However, since $l\in Q(\mathcal M_{d-1}(b_3t_{c,m+1}))$, we deduce that $U^{d-1}_{b_2,c,n}-l\subset U^{d-1}_{b_4,c,n}$ for some $b_4\sim b_2$. Our inductive hypothesis then implies that at most $d-2$ of the boxes $U^{d-1}_{b_4,c,n}$ meet $\nu_2^\perp$. This completes the proof.
\end{proof}

We are now ready to handle the general case of a curved $C^2$ hypersurface $\srf$. We assume that $\srf$ is given near $\xi=0$ as the graph of a $C^2$ function $\Phi:\R^{d-1}\to\R$. We write
    \[\Phi(\omega)=C(\omega,\omega)+o(|\omega^2|)
    \label{c2-smoothness}\]
where $C$ is some quadratic form. We assume additionally that the largest-magnitude eigenvalue of $C$ corresponds to an eigenvector pointing in the $\eta_1=(1,0,\cdots,0)$ direction. It is also convenient to assume by rescaling that $C(\eta_1,\eta_1)=1$. We will now set
    \[\omega_n=(c\ie n,c\ie{3n},c\ie{4n},\cdots,c\ie{dn})
    \label{def-omega-n}\]
for some $c>1$. We set $\xi_n=\Phi(\omega_n)$.

We are now ready to prove \cref{lemma-pigeonholing} for $\srf$.

\begin{lemma}\label{lemma-srf-sufficient}
    For sufficiently large $n>0$, each $\xi_n$ as defined above lies in the box $U^d_{b,c,n}$ for some choice of $b,c$ consistent with \cref{lemma-moment-curve}.
\end{lemma}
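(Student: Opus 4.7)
The plan is to reduce everything to a coordinate-wise computation. Because \cref{lemma-moment-curve} is formulated uniformly over $\nu\in\mathbb S^{d-1}$ and all of its ingredients (the moment curve $\mathcal M_d$, the scaling $L_c$, the map $\varphi$, and the boxes $U^d_{b,c,n}$) transform covariantly under a permutation of the $d$ ambient axes, I am free to reorder coordinates so that the graph direction $\Phi(\omega_n)$ sits in the second slot of $\xi_n$. Under this identification,
\begin{equation*}
    \xi_n=(c^{-n},\,\Phi(\omega_n),\,c^{-3n},\,c^{-4n},\,\ldots,\,c^{-dn}),
    \qquad
    x_{c,n}=(c^{-n},\,c^{-2n},\,c^{-3n},\,\ldots,\,c^{-dn}),
\end{equation*}
so $\xi_n-x_{c,n}$ vanishes in every coordinate except the second, where it equals $\Phi(\omega_n)-c^{-2n}$. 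Box containment $\xi_n\in U^d_{b,c,n}$ thus collapses to the single estimate $|\Phi(\omega_n)-c^{-2n}|\leq b^2c^{-2(n+1)}$.

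To produce this bound I would expand $\Phi(\omega_n)=C(\omega_n,\omega_n)+o(|\omega_n|^2)$ via \cref{c2-smoothness}. The normalization $C\eta_1=\eta_1$ with $\eta_1=(1,0,\ldots,0)$ forces $C_{1,1}=1$ and $C_{1,j}=0$ for $j\geq2$, so the contribution of the first row/column of $C$ to $C(\omega_n,\omega_n)$ is exactly $\omega_{n,1}^2=c^{-2n}$, which cancels the $c^{-2n}$ from $x_{c,n}$. The only surviving piece is the lower-right block
\begin{equation*}
    \sum_{i,j\geq2}C_{i,j}\,\omega_{n,i}\,\omega_{n,j}=O(c^{-6n}),
\end{equation*}
because $|\omega_{n,i}|\leq c^{-3n}$ for $i\geq2$. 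Combined with the Taylor remainder $o(|\omega_n|^2)=o(c^{-2n})$, this yields $|\Phi(\omega_n)-c^{-2n}|=o(c^{-2n})$ as $n\to\infty$.

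The right-hand side of the target inequality is $b^2c^{-2(n+1)}=(b^2/c^2)\,c^{-2n}$; for any fixed $c\gg b\gg1$ admissible in \cref{lemma-moment-curve}, the ratio $b^2/c^2$ is a fixed positive constant, while $|\Phi(\omega_n)-c^{-2n}|/c^{-2n}\to0$. The inequality therefore holds for all $n$ large enough, proving $\xi_n\in U^d_{b,c,n}$. The only real content is the quadratic expansion; the coordinate permutation is pure bookkeeping and, in retrospect, explains the exotic exponents in $\omega_n=(c^{-n},c^{-3n},c^{-4n},\ldots,c^{-dn})$: the ``missing'' scale $c^{-2n}$ is reserved for the quadratic term $C(\omega_n,\omega_n)\approx c^{-2n}$, which fills the gap so that $\xi_n$ matches the moment curve point $x_{c,n}$.
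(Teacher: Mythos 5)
Your argument is essentially the paper's: reduce $\xi_n\in U^d_{b,c,n}$ to the single-coordinate estimate $|\Phi(\omega_n)-c^{-2n}|\leq(b/c)^2 c^{-2n}$, expand $\Phi$ to second order, and observe that the quadratic form contributes $c^{-2n}$ plus terms of size $O(c^{-4n})$ while the $C^2$ remainder is $o(c^{-2n})$. The paper bounds the quadratic-form discrepancy by a mean-value estimate on $\omega\mapsto C(\omega,\omega)$ rather than your explicit block decomposition using $C_{1j}=0$ for $j\geq 2$, but this is cosmetic; both give $\Phi(\omega_n)=c^{-2n}+o(c^{-2n})$, which is what is needed for $n$ large.

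The one thing to flag is the opening justification. The moment curve $\mathcal M_d$, the scaling $L_c$, and the map $\varphi$ are \emph{not} covariant under permutations of the $d$ ambient axes: $\varphi$ divides by the first coordinate, $L_c$ scales the $k$-th coordinate by $c^{-k}$, and the exponents in $\mathcal M_d$ are strictly increasing, so a generic permutation would destroy the structure on which the induction in Lemma~\ref{lemma-moment-curve} relies. The legitimate move --- the one the paper makes tacitly --- is not a permutation of the moment curve but a rotation of $\srf$ itself: since the conclusion of Theorem~\ref{thm-counterexample} is rotation-invariant, one is free to fix coordinates on $\R^d$ at the outset so that $\srf$ is a graph over $\{x_2=0\}$ with its principal curvature direction along the $x_1$-axis, and then apply Lemma~\ref{lemma-moment-curve} to the standard, unpermuted moment curve in those coordinates. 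With that substitution your proof is correct and coincides with the paper's.
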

\begin{proof}
Suppose we set $\tilde{\mathcal M}_d(t)=\Phi(t,t^3,t^4,\cdots,t^d)$ so that $\xi_n=\tilde{\mathcal M}_d(c\ie n)$. Note that $\tilde{\mathcal M}_d(t)-\mathcal M_d(t)$ lies entirely in the $(0,1,\cdots,0)$-direction, so it suffices to prove that this difference lies within the $(0,1,\cdots,0)$-width of $U^d_{b,c,n}$. Recall that this width is $O(|t^2|)$, so it suffices to show that
    \[\tilde{\mathcal M}_d(t)-\mathcal M_d(t)=o(|t^2|)\]
However, we have
    \[B((t,0,\cdots,0),(t,0,\cdots,0))=t^2\]
and
    \[|(0,t^3,t^4,\cdots,t^d)|=O(|t|^3)\]
Since the derivative of $\omega\to B(\omega,\omega)$ tends to zero as $\omega\to0$, we conclude that  
    \[B((t,t^3,t^4,\cdots,t^d),(t,t^3,t^4,\cdots,t^d))=B((t,0,\cdots,0),(t,0,\cdots,0))+o(|t|^3)=t^2+o(|t|^2)\]
by writing $(t,t^3,t^4,\cdots,t^d)=(t,0,\cdots,0)+(0,t^3,t^4,\cdots,t^d)$.

This completes the proof upon applying \cref{c2-smoothness}.
\end{proof}

To finish the proof of \cref{lemma-pigeonholing}, note that we can relabel the $\xi_n$ to assume that $n$ is sufficiently large. The pigeonholing condition \cref{pigeons!} is then satisfied by choosing $c$ sufficiently large. The only step left is then proving that $\scale\sim R\inv$. The smallest separation between two values of $\xi_n$ will occur when $n$ reaches its maximum, i.e. $n\sim N$. In particular, the minimum separation is $\gtrsim |t_N|\sim c\ie N$. We can ensure that $c\ie N\sim R\inv$ by choosing the right constant in $N\sim\log R$.
\begin{remark}
There is actually another proof of \cref{lemma-pigeonholing} that relies on a polynomial-partitioning technique. Instead of inducting on the dimension $d$, one can project the moment curve onto a line to obtain a polynomial whose coefficients are suitably bounded on both sides. The problem then reduces to a problem about the shape of sets where the polynomial and its derivatives have certain signs. However, this polynomial-based approach requires some delicate estimates on the shapes of these sets and how these shapes interact with the corresponding shapes for its derivative. It is also very difficult to yield the sharp estimate of $d-1$ for the number of bad points using this approach, and it is unclear how to construct the sharp examples for $\nu$. It is for these reasons that the approach included is preferred.
\end{remark}
\subsection{The proof of Lemma \ref{main-lemma} using Lemma \ref{lemma-pigeonholing}}\label{subsec-prove-lemma}

To prove the plane condition \cref{plane-infty-main-lemma}, it suffices to show that the projections $\pi_\nu\p{\vec c\cdot\vec\xi}$ are distinct at scale $R\inv$. Note that, even though \cref{lemma-pigeonholing} gave only distinct values of $\log_2\xi_i$, we can easily guarantee $\log_K\xi_i$ for any $K>0$ using only every $\log_2(K)$th value of $\xi_i$. Let us fix $\vec c'\in\{0,1\}^N$. If
    \[\abs{\pi_\nu\p{\vec c\cdot\vec\xi}-\pi_\nu\p{\vec c'\cdot\vec\xi}}\leq R\inv
        \label{the-bad-c}\]
holds for more than $2^{d-1}$ values of $\vec c$, then \cref{the-bad-c} holds for at least one $\vec c'$ so that $c'_m\neq c_m$ for some $m\notin S_\nu$, that is, $m$ is not a bad value, and $c'_n=c_n$ for all $n\in S_\nu$. Let us assume that $m$ is the value in $\{1,\cdots,N\}$ where $\pi_\nu\p{\xi_m-\xi_0}$ is maximized. Next, note that since the values of $\pi_\nu\p{\xi_k-0}$ lie in distinct logarithmic intervals of the form $[\alpha,\alpha K]$, we deduce that $|\pi_\nu\p{\xi_m-\xi_0}|$ is more than twice the sum of all $|\pi_\nu\p{\xi_k-\xi_0}|$ for $k$ with $c_k\neq c'_k$. This contradicts \cref{the-bad-c} by the triangle inequality applied to $\sum_l\pm\pi_\nu\p{\xi_l-\xi_0}$, where the sum is over all values of $l$ where $c_l\neq c'_l$.

\printbibliography
\end{document}